\providecommand{\U}[1]{\protect\rule{.1in}{.1in}}
\newtheorem{theorem}{Theorem}
\newtheorem{corollary}[theorem]{Corollary}
\newtheorem{definition}[theorem]{Definition}
\newtheorem{lemma}[theorem]{Lemma}
\newenvironment{proof}[1][Proof]{\noindent\textbf{#1.} }{\ \rule{0.5em}{0.5em}}
\begin{document}

\title{Convergence rate of weak Local Linearization schemes for stochastic
differential equations with additive noise}

\author{J.C. Jimenez
\thanks{Instituto de Cibernetica, Matematica y Fisica, Calle 15,
No. 551, entre C y D, Vedado, La Habana 10400, Cuba. e-mail:
jcarlos@icimaf.cu} \ and F. Carbonell
\thanks{Research Scientist, Biospective Inc., Montreal, Canada. e-mail:
felixmiguelc@gmail.com}}

\maketitle

\begin{abstract}
There exists a diversity of weak Local Linearization (LL) schemes
for the integration of stochastic differential equations with
additive noise, which differ with respect to the algorithm that is
employed in the numerical implementation of the weak Local Linear
discretizations. On the contrary to the Local Linear discretization,
the rate of convergence of the LL schemes has not been considered up
to now. In this work, a general theorem about this issue is derived
and further is applied to a number of specific schemes. As
application, the convergence rate of weak LL schemes for equations
with jumps is also presented.
\end{abstract}

\section{Introduction}

The evaluation of Wiener functional space integrals and the
estimation of difussion processes are essential matters for the
resolution of a number of problems in mathematical physics, biology,
finance and other fields. In the solution of this kind of problems,
weak numerical integrators for Stochastic Differential Equations
(SDEs) have become an important tool \cite{Talay90, Talay90b,
Kloeden 1995, Rao99, Jimenez06 APFM}. Well-known are, for instance,
the Euler, the Milstein, the Talay-Tubaro extrapolation, the
Runge-Kutta and the Local Linearization methods (see \cite{Schurz
2002} for a review of these methods).

Specifically, weak Local Linearization (LL) schemes for SDEs with
additive noise have played a prominent role in the construction of
effective inference methods for SDEs \cite{Shoji 1997, Shoji 1998b,
Durham02, Singer02, Hurn07}, in the estimation of distribution
functions in Monte Carlo Markov Chain methods \cite{Stramer99b,
Roberts01, Hansen03} and the simulation of likelihood functions
\cite{Nicolau02}. Extensive simulation studies carried out in these
papers have showed that these LL schemes posses high numerical
stability and remarkable computational efficiency. Other distinctive
feature of the weak LL integrators is that they preserve the
ergodicity and geometric ergodicity properties of a wide class of
nonlinear SDEs \cite{Hansen03}.

This paper deals with an open problem related with the weak Local
Linearization method. It is known that the order-$\beta$ weak Local
Linear discretization is the base for the construction and study of
such a method \cite{Carbonell06}. Starting with this discretization
a variety of numerical schemes can be derived, which mainly differ
with respect to the algorithm used in the numerical implementation
of the discretization. This feature provides flexibility to the LL
method for suitable adjustments when is applied to certain types of
equations (e.g., large systems of SDEs, etc). However, in contrast
with the weak Local Linear discretization, the convergence rate of
the schemes has not been considered until now. This essential issue
must be addressed for developing computationally efficient weak LL
schemes.

In this work, a main theorem about the convergence rate of weak LL
schemes for SDEs with additive noise is derived and, on this base,
convergence results are obtained for some specific schemes. As
direct application, the convergence rate of some weak LL schemes for
equations with jumps is also demostrated. A summary of basic results
on the LL method is presented for supporting the subsequent
presentation.

\section{Notations and preliminaries\label{Section WLL aprox}}

Let $(\Omega,\mathcal{F},P)$ be a complete probability space, and
$\{\mathcal{F}_{t},$ $t\geq t_{0}\}$ be an increasing right
continuous family of complete sub $\sigma$-algebras of
$\mathcal{F}$. Consider a $d$-dimensional diffusion process
$\mathbf{x}$ defined by the following stochastic
differential equation with additive noise%
\begin{align}
d\mathbf{x}(t)  & =\mathbf{f}(t,\mathbf{x}(t))dt+\sum\limits_{i=1}%
^{m}\mathbf{g}_{i}(t)d\mathbf{w}^{i}(t)\text{ \ }\label{WSDE-LLA-1}\\
\mathbf{x}(t_{0})  & =\mathbf{x}_{0},\label{WSDE-LLA-2}%
\end{align}
where the drift coefficient $\mathbf{f}:[t_{0},T]\times\mathbb{R}%
^{d}\rightarrow$ $\mathbb{R}^{d}$ and the diffusion coefficient $\mathbf{g}%
_{j}:\left[  t_{0},T\right]  \rightarrow$ $\mathbb{R}^{d}$ are
differentiable functions,
$\mathbf{w=(\mathbf{w}}^{1},\ldots,\mathbf{w}^{m}\mathbf{)}$ is an
$m$-dimensional $\mathcal{F}_{t}$-adapted standard Wiener process,
and $\mathbf{x}_{0}$ is a $\mathcal{F}_{t_{0}}$-measurable random
vector. The standard conditions for the existence and uniqueness of
a solution for (\ref{WSDE-LLA-1})-(\ref{WSDE-LLA-2}) are assumed.

Consider the time discretization $(t)_{h}=\{t_{n}:n=0,1,\ldots,N\}$,
with maximum step-size $h\in(0,1)$, defined as a sequence of
$\mathcal{F}$-stopping
times that satisfy $t_{0}<t_{1}<\cdots<t_{N}=T$ and $\sup\limits_{n}%
(h_{n})\leq h$, w.p.1, where $t_{n}$ is
$\mathcal{F}_{t_{n}}$-measurable for each $n=0,1,\ldots,N$, and
$h_{n}=t_{n+1}-t_{n}$. In addition, let us denote
$n_{t}=\max\{n=0,1,2,\ldots:$ $t_{n}\leq t$ and $t_{n}\in\left(
t\right) _{h}\}$ for all $t\in\lbrack t_{0},T]$.

\subsection{Weak Local Linear discretization \cite{Carbonell06}}

\begin{definition}
For a given time discretization $\left(  t\right)  _{h},$ the
order-$\beta$
$(=1,2)$ weak Local Linear discretization of the solution of (\ref{WSDE-LLA-1}%
)-(\ref{WSDE-LLA-2}) is defined by the recurrent relation
\begin{equation}
\mathbf{y}_{n+1}=\mathbf{y}_{n}+\mathbf{\phi}_{\beta}(t_{n},\mathbf{y}%
_{n};h_{n})+\mathbf{\eta}(t_{n},\mathbf{y}_{n};h_{n}),\label{LLDiscretization}%
\end{equation}
where
\begin{equation}
\mathbf{\phi}_{\beta}(t_{n},\mathbf{y}_{n};h_{n})=\int\limits_{0}^{h_{n}%
}e^{\mathbf{f}_{\mathbf{x}}(t_{n},\mathbf{y}_{n})(h_{n}-s)}(\mathbf{f}%
_{\mathbf{x}}(t_{n},\mathbf{y}_{n})\mathbf{y}_{n}+\mathbf{a}_{n}%
^{\mathbb{\beta}}(t_{n}+s))ds,\label{DeterministicLL}%
\end{equation}
and $\mathbf{\eta}\left(  t_{n},\mathbf{y}_{n};h\right)  $ is a zero
mean Gaussian random variable with variance
\begin{equation}
\mathbf{\Sigma}(t,\mathbf{y};\delta)=\int\limits_{0}^{\delta}e^{\mathbf{f}%
_{x}(t,\mathbf{y})(\delta-s)}\mathbf{G}(t+s)\mathbf{G}^{\intercal
}(t+s)e^{\mathbf{f}_{\mathbf{x}}^{\intercal}(t,\mathbf{y})(\delta
-s)}ds.\label{CovarianceMatrix}%
\end{equation}
Here, $\mathbf{G}(u)\mathbf{=[g}_{1}(u),\ldots,\mathbf{g}_{m}(u)]$
is an $d\times m$ matrix,
\[
\mathbf{a}_{n}^{\mathbb{\beta}}\left(  u\right)  =\left\{
\begin{array}
[c]{cc}%
\mathbf{f(}t_{n},\mathbf{y}_{n})-\mathbf{f}_{\mathbf{x}}(t_{n},\mathbf{y}%
_{n})\mathbf{y}_{n}+\mathbf{f}_{t}(t_{n},\mathbf{y}_{n})(u-t_{n}) &
\text{for
}\mathbb{\beta}=1\\
\mathbf{a}_{n}^{1}\left(  u\right)  +\frac{1}{2}\sum\limits_{j=1}%
^{m}(\mathbf{I}_{d\times d}\otimes\mathbf{g}_{j}^{\intercal}\left(
t_{n}\right)  )\mathbf{f}_{\mathbf{xx}}(t_{n},\mathbf{y}_{n})\mathbf{g}%
_{j}\left(  t_{n}\right)  \left(  u-t_{n}\right)   & \text{for
}\mathbb{\beta }=2
\end{array}
\right.  ,
\]
$\mathbf{f}_{\mathbf{x}}$, $\mathbf{f}_{t}$ denote the partial
derivatives of $\mathbf{f}$ with respect to the variables
$\mathbf{x}$ and $t$, respectively, $\mathbf{f}_{\mathbf{xx}}$ the
Hessian matrix of $\mathbf{f}$ with respect to $\mathbf{x}$, and the
initial point $\mathbf{y}_{0}$ is assumed to be a
$\mathcal{F}_{t_{0}}$-measurable random vector.
\end{definition}

Denote by $\mathcal{C}_{P}^{l}$ the space of $l$ time continuously
differentiable functions with partial derivatives up to order $l$
having polynomial growth.

\begin{theorem}
\label{TheoremConvAppLL}Let $\mathbf{x}$ be the solution of the SDE
(\ref{WSDE-LLA-1})-(\ref{WSDE-LLA-2}), and $\mathbf{y}$ the
order-$\beta$ weak
Local Linear discretization of $\mathbf{x}$ defined by (\ref{LLDiscretization}%
). Suppose that the drift and diffusion coefficients of the SDE
(\ref{WSDE-LLA-1}) satisfy the following conditions%
\begin{equation}
\mathbf{f}^{k}\in\mathcal{C}_{P}^{2(\beta+1)}([t_{0},T]\times\mathbb{R}%
^{d},\mathbb{R})\text{ \ \ \ and \ \ \ \ }\mathbf{g}_{i}^{k}\in\mathcal{C}%
_{P}^{2(\beta+1)}([t_{0},T],\mathbb{R})\text{ }\label{W ComponentsCond}%
\end{equation}%
\begin{equation}
\left\vert \mathbf{f}(s,\mathbf{u})\right\vert +%
{\displaystyle\sum\limits_{i=1}^{m}}
\left\vert \mathbf{g}_{i}(s)\right\vert \leq K(1+\left\vert \mathbf{u}%
\right\vert ),\label{W GrowthBoundLemma}%
\end{equation}
and%
\begin{equation}
\left\vert \frac{\partial\mathbf{f}(s,\mathbf{u})}{\partial
t}\right\vert
+\left\vert \frac{\partial\mathbf{f}(s,\mathbf{u})}{\partial\mathbf{x}%
}\right\vert +\left\vert \frac{\partial^{2}\mathbf{f}(s,\mathbf{u})}%
{\partial\mathbf{x}^{2}}\right\vert \delta_{\beta}^{2}\leq
K\label{W BoundLemma}%
\end{equation}
for all $s\in\lbrack t_{0},T]$ and $\mathbf{u}\in\mathbb{R}^{d}$,
where $K$ is a positive constant. Further, suppose that the initial
values of $\mathbf{x}$ and $\mathbf{y}$ satisfy
\begin{equation}
E(\left\vert \mathbf{x}_{0}\right\vert ^{j})<\infty\text{ },\text{\ \ \ \ }%
E(\left\vert \mathbf{y}_{0}\right\vert ^{j})<\infty,\text{ \ \ \ \
}\left\vert E(g(\mathbf{x}_{0}))-E(g(\mathbf{y}_{0}))\right\vert
\leq C_{0}h^{\beta
}\label{WSDE-LLA-7}%
\end{equation}
for $j=1,2,\ldots,$ some constant $C_{0}>0$ and all $g\in\mathcal{C}%
_{P}^{2(\beta+1)}(\mathbb{R}^{d},\mathbb{R})$. Then there exits a
positive
constant $C_{g}$ such that%
\begin{equation}
\left\vert E\left(  g(\mathbf{x}(T))\right)  -E\left(  g(\mathbf{y}_{n_{T}%
})\right)  \right\vert \leq C_{g}(T-t_{0})h^{\beta}.\label{GlobalOrder}%
\end{equation}

\end{theorem}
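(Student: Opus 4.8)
The plan is to follow the classical route for weak convergence of one-step schemes, namely the fundamental theorem relating one-step (local) and global weak errors (as in \cite{Kloeden 1995}), adapted to the random time discretization $(t)_h$. Introduce the auxiliary function $u(s,\mathbf{v})=E(g(\mathbf{x}(T))\mid\mathbf{x}(s)=\mathbf{v})$. Using the smoothness assumptions (\ref{W ComponentsCond})--(\ref{W BoundLemma}) together with standard regularity theory for the backward Kolmogorov equation, one shows that $u(s,\cdot)\in\mathcal{C}_P^{2(\beta+1)}(\mathbb{R}^d,\mathbb{R})$ with spatial derivatives dominated by a polynomial in $|\mathbf{v}|$ uniformly in $s\in[t_0,T]$, and that $u$ enjoys the flow identity $u(t_n,\mathbf{v})=E(u(t_{n+1},\mathbf{x}^{t_n,\mathbf{v}}(t_{n+1})))$, where $\mathbf{x}^{t_n,\mathbf{v}}$ denotes the solution of (\ref{WSDE-LLA-1}) started at $(t_n,\mathbf{v})$. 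Since $u(T,\cdot)=g$, the global error telescopes as
\begin{align*}
E(g(\mathbf{x}(T)))-E(g(\mathbf{y}_{n_T})) &= \big(E(u(t_0,\mathbf{x}_0))-E(u(t_0,\mathbf{y}_0))\big) \\
&\quad + \sum_{n=0}^{n_T-1} E\big(E(u(t_{n+1},\mathbf{x}^{t_n,\mathbf{y}_n}(t_{n+1}))-u(t_{n+1},\mathbf{y}_{n+1})\mid\mathcal{F}_{t_n})\big),
\end{align*}
so that it suffices to control each summand by a local weak error with test function $\varphi=u(t_{n+1},\cdot)$, and to use (\ref{WSDE-LLA-7}) for the first bracket.

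The core step is the local estimate: for every $\varphi\in\mathcal{C}_P^{2(\beta+1)}$ there are a constant $C$ and an integer $q$ (independent of $n$) with
\[
\big|E(\varphi(\mathbf{x}^{t_n,\mathbf{y}_n}(t_{n+1}))-\varphi(\mathbf{y}_{n+1})\mid\mathcal{F}_{t_n})\big|\le C(1+|\mathbf{y}_n|^{q})\,h_n^{\beta+1}.
\]
Here I would exploit that, conditionally on $\mathcal{F}_{t_n}$, $\mathbf{y}_{n+1}$ has the same law as $\mathbf{z}(t_{n+1})$, where $\mathbf{z}$ solves the linear SDE $d\mathbf{z}(s)=(\mathbf{f}_{\mathbf{x}}(t_n,\mathbf{y}_n)\mathbf{z}(s)+\mathbf{a}_n^{\beta}(s))\,ds+\mathbf{G}(s)\,d\mathbf{w}(s)$ on $[t_n,t_{n+1}]$ with $\mathbf{z}(t_n)=\mathbf{y}_n$; indeed $\phi_\beta$ and $\Sigma$ in (\ref{DeterministicLL})--(\ref{CovarianceMatrix}) are precisely the conditional mean increment and covariance of this process. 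Both $\mathbf{x}^{t_n,\mathbf{y}_n}$ and $\mathbf{z}$ have the same additive diffusion $\mathbf{G}$, and their drifts differ only by the Taylor remainder of $\mathbf{f}$ around $(t_n,\mathbf{y}_n)$ — a first order remainder when $\beta=1$ and, when $\beta=2$, the remainder left after also subtracting the extra $\tfrac{1}{2}\sum_j(\mathbf{I}_{d\times d}\otimes\mathbf{g}_j^{\intercal})\mathbf{f}_{\mathbf{xx}}\mathbf{g}_j$ Itô-correction term contained in $\mathbf{a}_n^{\beta}$. Comparing the Itô--Taylor expansions of $E(\varphi(\mathbf{x}^{t_n,\mathbf{y}_n}(t_{n+1})))$ and $E(\varphi(\mathbf{z}(t_{n+1})))$ in powers of $h_n$, the coefficients of $h_n^k$ for $k\le\beta$ coincide by construction, so only the order $h_n^{\beta+1}$ remainders survive; these are bounded using $\|e^{\mathbf{f}_{\mathbf{x}}(t_n,\mathbf{y}_n)s}\|\le e^{Ks}$ (from (\ref{W BoundLemma})), the linear growth bound (\ref{W GrowthBoundLemma}), and the polynomial growth of the derivatives of $\mathbf{f}$, $\mathbf{g}_i$ and $\varphi$.

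To convert the local bound into the global one I also need the uniform moment estimates $\sup_{0\le n\le N}E(|\mathbf{y}_n|^{q})<\infty$ for every $q\ge1$, proved by induction on $n$: one step of (\ref{LLDiscretization}) multiplies the $q$-th moment by at most $1+Ch_n$, because of $\|e^{\mathbf{f}_{\mathbf{x}}s}\|\le e^{Ks}$ and the linear growth of $\mathbf{a}_n^{\beta}$ and $\mathbf{G}$, so iterating over the at most $N$ steps gives a bound of order $e^{C(T-t_0)}(1+E(|\mathbf{y}_0|^{q}))$; the companion estimate $E(\sup_{s\in[t_n,t_{n+1}]}|\mathbf{x}^{t_n,\mathbf{y}_n}(s)|^{q}\mid\mathcal{F}_{t_n})\le C(1+|\mathbf{y}_n|^{q})$ is classical. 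Combining, each of the $n_T$ summands above is bounded by $C\,E(1+|\mathbf{y}_n|^{q})\,h_n^{\beta+1}\le C' h^{\beta}h_n$, and since $\sum_{n=0}^{n_T-1} h_n = T-t_0$ while the initial bracket is $\le C_0 h^{\beta}$ by (\ref{WSDE-LLA-7}), we arrive at (\ref{GlobalOrder}).

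The main obstacle is the local weak-order-$(\beta+1)$ estimate: it requires carefully matching the Itô--Taylor expansions of the true and of the linearized one-step evolutions (this is exactly why the discretization is said to be of order $\beta$, cf. \cite{Carbonell06}) and then bookkeeping all the polynomial-growth remainder terms uniformly in $n$ under the random step-sizes $h_n$. The regularity of $u$ is also somewhat technical but follows standard lines from (\ref{W ComponentsCond})--(\ref{W BoundLemma}).
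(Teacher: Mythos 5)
Your argument is sound, but it follows a genuinely different route from the one the paper relies on. The paper does not re-prove this theorem: it quotes it from \cite{Carbonell06}, and the intended proof (visible both there and in the way the paper later proves Theorem \ref{WSCT}) is to verify the hypotheses of the general weak convergence theorem, Theorem 14.5.2 in \cite{Kloeden 1995}: uniform moment bounds for the discretization, plus the one-step weak consistency estimate obtained by observing that $\mathbf{y}_{n+1}$ is the weak solution at $t_{n+1}$ of the piecewise linear SDE (\ref{WSDE LLEquation})--(\ref{WSDE LLEquation b}) and that its It\^{o} coefficient functions for all $\alpha\in\Gamma_{\beta}$ coincide with those of (\ref{WSDE-LLA-1}) at $(t_{n},\mathbf{y}_{n})$ (this is precisely Lemma \ref{LemmaLL} together with Lemma 5.11.7 of \cite{Kloeden 1995}). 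You instead unfold the proof of that fundamental theorem itself: you introduce $u(s,\mathbf{v})=E(g(\mathbf{x}(T))\mid\mathbf{x}(s)=\mathbf{v})$, telescope the global error along the partition, and reduce everything to a local weak error of order $h_{n}^{\beta+1}$ against the linearized one-step flow, whose coefficient-matching step is exactly the content of Lemma \ref{LemmaLL}. What your route buys is self-containedness (no appeal to Theorem 14.5.2) and a transparent mechanism for how the local order $\beta+1$ becomes global order $\beta$; what it costs is the extra regularity theory for the backward Kolmogorov function $u$ (existence and polynomial growth of its spatial derivatives up to order $2(\beta+1)$, uniformly on $[t_{0},T]$), which the citation-based route gets for free, and some additional care with the random partition: since the $t_{n}$ are stopping times, your conditioning of $u(t_{n+1},\cdot)$ on $\mathcal{F}_{t_{n}}$ and the flow identity require $t_{n+1}$ to be $\mathcal{F}_{t_{n}}$-measurable (the admissibility condition under which the framework of \cite{Kloeden 1995} operates), a point worth stating explicitly. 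With those two technical points filled in, your proof is a correct, more elementary alternative to the paper's reference-based argument.
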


By construction, the value $\mathbf{y}_{n+1}$ of the Local Linear
discretization (\ref{LLDiscretization}) is the weak solution of the
piecewise linear SDE
\begin{align}
d\mathbf{z}\left(  t\right)   & =(\mathbf{A}_{n}\mathbf{z}(t)+\mathbf{a}%
_{n}^{\mathbb{\beta}}\left(  t\right)  )dt+\sum\limits_{i=1}^{m}\mathbf{g}%
_{i}(t)d\mathbf{w}^{i}(t)\text{, \ \ }t\in(t_{n},t_{n+1}%
],\label{WSDE LLEquation}\\
\mathbf{z}(t_{n})  & =\mathbf{y}_{n}\text{ \ \ }\label{WSDE LLEquation b}%
\end{align}
at $t_{n+1}$ for all $t_{n+1}\in$ $\left(  t\right)  _{h}$, where
$\mathbf{A}_{n}=\mathbf{f}_{\mathbf{x}}(t_{n},\mathbf{y}_{n})$.

Hereafter, the following definitions and notations from
\cite{Kloeden 1995} are required. Let $\mathcal{M}$ be the set of
all the multi-indexes $\alpha=(j_{1},\ldots,j_{l(\alpha)})$ with
$j_{i}\in\{0,1,\ldots,m\}$ and $i=1,\ldots,l(\alpha)$, where $m$ is
the dimension of $\mathbf{w}$ in (\ref{WSDE-LLA-1}). $l(\alpha)$
denotes the length of the multi-index $\alpha$ and $n(\alpha)$ the
number of its zero components. $-\alpha$ and $\alpha-$ are the
multi-indexes in $\mathcal{M}$ obtained by deleting the first and
the last component of $\alpha$, respectively. The multi-index of
length zero will be
denoted by $v$. Denote by $I_{\alpha}\left[  \cdot\right]  _{t_{n},t_{n}%
+h_{n}}$ the multiple Ito integrals for all
$\mathbb{\alpha}\in\mathcal{M}$. $\ $Further,
\[
L^{0}=\frac{\partial}{\partial t}+\sum\limits_{k=1}^{d}\mathbf{f}^{k}%
\frac{\partial}{\partial\mathbf{x}^{k}}+\frac{1}{2}\sum\limits_{k,l=1}^{d}%
\sum\limits_{j=1}^{m}\mathbf{g}_{j}^{k}\mathbf{g}_{j}^{l}\text{ }%
\frac{\partial^{2}}{\partial\mathbf{x}^{k}\partial\mathbf{x}^{l}}%
\]
denotes the diffusion operator for the SDE (\ref{WSDE-LLA-1}), and
\[
L^{j}=\sum\limits_{k=1}^{d}\mathbf{g}_{j}^{k}\frac{\partial}{\partial
\mathbf{x}^{k}},
\]
for $j=1,\ldots,m$.

\begin{lemma}
\label{LemmaLL} With $\beta=1,2$, let
\[
\Gamma_{\beta}=\left\{
\alpha\in\mathcal{M}:l(\alpha)\leq\beta\right\}
\]
be a hierarchical set, and $\mathcal{B}(\Gamma_{\beta})=\{\mathbb{\alpha}%
\in\mathcal{M}\backslash\Gamma_{\beta}:-\mathbb{\alpha}\in\Gamma_{\beta}\}$
the remainder set of $\Gamma_{\beta}$. Further, let $\mathbf{y}=\{\mathbf{y}%
(t),$ $t\in\lbrack t_{0},T]\}$ be the stochastic process defined as
\begin{equation}
\mathbf{y}(t)=\mathbf{y}_{n_{t}}+\mathbf{\phi}_{\beta}(t_{n_{t}}%
,\mathbf{y}_{n_{t}};t-t_{n_{t}})+\mathbf{\eta}(t_{n_{t}},\mathbf{y}_{n_{t}%
};t-t_{n_{t}}),\label{LLApproximation}%
\end{equation}
where the sequence $\{\mathbf{y}_{n_{t}}\}$, $n=0,1,\ldots,$ is the
Local
Linear discretization (\ref{LLDiscretization}), and let $\mathbf{z}%
=\{\mathbf{z}(t),$ $t\in\lbrack t_{0},T]\}$ be the stochastic
process defined
by%
\begin{equation}
\mathbf{z}(t)=\mathbf{y}_{n_{t}}+\sum\limits_{\alpha\in\Gamma_{\beta}/\{\nu
\}}I_{\alpha}[\Lambda_{\alpha}(t_{n_{t}},\mathbf{y}_{n_{t}};t_{n_{t}%
},\mathbf{y}_{t_{n_{t}}})]_{t_{n_{t}},t}+\sum\limits_{\alpha\in\mathcal{B}%
(\Gamma_{\beta})}I_{\alpha}[\Lambda_{\alpha}(.,\mathbf{y}.;t_{n_{t}%
},\mathbf{y}_{n_{t}})]_{t_{n_{t}},t},\label{WSDE-LLA-3}%
\end{equation}
where, for all given $(t_{n_{t}},\mathbf{y}_{n_{t}})$,%
\[
\Lambda_{\mathbb{\alpha}}(s,\mathbf{v};t_{n_{t}},\mathbf{y}_{n_{t}})=\left\{
\begin{array}
[c]{cc}%
L^{j_{1}}\ldots L^{j_{l(\alpha)-1}}\mathbf{p}_{\beta}(s,\mathbf{v};t_{n_{t}%
},\mathbf{y}_{n_{t}}) & \text{ }if\text{ }j_{l(\alpha)}=0\\
L^{j_{1}}\ldots
L^{j_{l(\alpha)-1}}\mathbf{g}_{j_{l(\mathbb{\alpha)}}}(s) & \text{
}if\text{ }j_{l(\alpha)}\neq0
\end{array}
\right.
\]
is a function of $s$ and $\mathbf{v}$, and%
\[
\mathbf{p}_{\beta}(s,\mathbf{v};r,\mathbf{u})=\left\{
\begin{array}
[c]{cc}%
\mathbf{f}(r,\mathbf{u})+\mathbf{f}_{\mathbf{x}}(r,\mathbf{u})(\mathbf{v-u)}%
+\mathbf{f}_{t}(r,\mathbf{u})(s-r) & \text{for }\mathbb{\beta}=1\\
\mathbf{p}_{1}(s,\mathbf{v};r,\mathbf{u})\text{\ }+\frac{1}{2}\sum
\limits_{j=1}^{m}(\mathbf{I}_{d\times
d}\otimes\mathbf{g}_{j}^{\intercal
}\left(  r\right)  )\mathbf{f}_{\mathbf{xx}}(r,\mathbf{u})\mathbf{g}%
_{j}\left(  r\right)  \left(  s-r\right)  & \text{for
}\mathbb{\beta}=2
\end{array}
\right.
\]
for all $r,s\in\lbrack t_{0},T]$, and $\mathbf{u,v\in}\mathbb{R}^{d}$. Then%
\[
E\left(  g(\mathbf{y}(t))\right)  =E\left(  g(\mathbf{z}(t))\right)
,
\]%
\[
E\left(  g(\mathbf{y}(t)-\mathbf{y}(t_{n_{t}}))\right)  =E\left(
g(\mathbf{z}(t)-\mathbf{z}(t_{n_{t}}))\right)
\]
for all $t\in\lbrack t_{0},T]$ and $g\in\mathcal{C}_{P}^{2(\beta
+1)}(\mathbb{R}^{d},\mathbb{R})$; and%
\begin{equation}
I_{\alpha}[\Lambda_{\alpha}(t_{n_{t}},\mathbf{y}_{n_{t}};t_{n_{t}}%
,\mathbf{y}_{n_{t}})]_{t_{n_{t}},t}=I_{\alpha}[\lambda_{\alpha}(t_{n_{t}%
},\mathbf{y}_{n_{t}})]_{t_{n_{t}},t},\label{IdentityIntegrals}%
\end{equation}
for all $\alpha\in\Gamma_{\beta}/\{\nu\}$ and $t\in\lbrack
t_{0},T]$, where $\lambda_{\alpha}$ denotes the Ito coefficient
function corresponding to the SDE (\ref{WSDE-LLA-1}).
\end{lemma}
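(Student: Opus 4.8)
The plan is to recognize $\mathbf{z}(t)$ as an \emph{exact} representation of the solution of the piecewise linear SDE (\ref{WSDE LLEquation})--(\ref{WSDE LLEquation b}) on the interval $(t_{n_{t}},t]$, and then to compare that solution with $\mathbf{y}(t)$ via the variation-of-constants formula. First I would note that, on $(t_{n_{t}},t_{n_{t}+1}]$, equation (\ref{WSDE LLEquation}) is a linear (inhomogeneous, time-dependent) SDE whose drift coefficient is $\mathbf{A}_{n_{t}}\mathbf{v}+\mathbf{a}_{n_{t}}^{\beta}(s)=\mathbf{p}_{\beta}(s,\mathbf{v};t_{n_{t}},\mathbf{y}_{n_{t}})$ and whose diffusion coefficients are the $\mathbf{g}_{j}(s)$ (one checks this identification directly from the definitions of $\mathbf{a}_{n}^{\beta}$ and $\mathbf{p}_{\beta}$ for $\beta=1,2$). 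Applying the operators $L^{0},L^{j}$ \emph{of this linear equation} repeatedly to its drift and diffusion coefficients produces precisely the functions $\Lambda_{\alpha}$, so the right-hand side of (\ref{WSDE-LLA-3}) is exactly the It\^{o}--Taylor expansion of the solution of (\ref{WSDE LLEquation})--(\ref{WSDE LLEquation b}) relative to the hierarchical set $\Gamma_{\beta}$ with remainder set $\mathcal{B}(\Gamma_{\beta})$. By the It\^{o}--Taylor expansion theorem of \cite{Kloeden 1995}, this expansion \emph{including} its remainder term is an identity, so $\mathbf{z}(t)$ equals the value at $t$ of the solution of (\ref{WSDE LLEquation})--(\ref{WSDE LLEquation b}) started at $\mathbf{y}_{n_{t}}$ at time $t_{n_{t}}$.

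Next I would evaluate that solution explicitly. By variation of constants,
\[
\mathbf{z}(t)=e^{\mathbf{A}_{n_{t}}(t-t_{n_{t}})}\mathbf{y}_{n_{t}}+\int_{t_{n_{t}}}^{t}e^{\mathbf{A}_{n_{t}}(t-s)}\mathbf{a}_{n_{t}}^{\beta}(s)\,ds+\int_{t_{n_{t}}}^{t}e^{\mathbf{A}_{n_{t}}(t-s)}\mathbf{G}(s)\,d\mathbf{w}(s).
\]
After the change of variable $s\mapsto t_{n_{t}}+s$ and inserting $\mathbf{A}_{n_{t}}\mathbf{y}_{n_{t}}$ into the integrand (using $e^{\mathbf{A}h}\mathbf{y}=\mathbf{y}+\int_{0}^{h}e^{\mathbf{A}(h-s)}\mathbf{A}\mathbf{y}\,ds$), the first two terms collapse to $\mathbf{y}_{n_{t}}+\mathbf{\phi}_{\beta}(t_{n_{t}},\mathbf{y}_{n_{t}};t-t_{n_{t}})$ by (\ref{DeterministicLL}); the third term is, conditionally on $\mathcal{F}_{t_{n_{t}}}$, a zero-mean Gaussian vector whose covariance, by the It\^{o} isometry applied to the $\mathcal{F}_{t_{n_{t}}}$-measurable integrand, is exactly $\mathbf{\Sigma}(t_{n_{t}},\mathbf{y}_{n_{t}};t-t_{n_{t}})$ of (\ref{CovarianceMatrix}). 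Since by (\ref{LLApproximation}) one has $\mathbf{y}(t)=\mathbf{y}_{n_{t}}+\mathbf{\phi}_{\beta}(t_{n_{t}},\mathbf{y}_{n_{t}};t-t_{n_{t}})+\mathbf{\eta}(t_{n_{t}},\mathbf{y}_{n_{t}};t-t_{n_{t}})$ with $\mathbf{\eta}$ carrying the same conditional law (zero mean, covariance $\mathbf{\Sigma}$), the conditional distribution of $\mathbf{z}(t)$ given $\mathcal{F}_{t_{n_{t}}}$ coincides with that of $\mathbf{y}(t)$; as $\mathbf{y}_{n_{t}}$ is common to both, the joint laws of $(\mathbf{y}_{n_{t}},\mathbf{y}(t))$ and $(\mathbf{y}_{n_{t}},\mathbf{z}(t))$ agree. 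Taking expectations of $g(\cdot)$ gives the first identity, and since every multiple It\^{o} integral over $[t_{n_{t}},t_{n_{t}}]$ vanishes we have $\mathbf{y}(t_{n_{t}})=\mathbf{y}_{n_{t}}=\mathbf{z}(t_{n_{t}})$, which together with the joint-law equality yields the second identity.

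For (\ref{IdentityIntegrals}), both $\Lambda_{\alpha}(t_{n_{t}},\mathbf{y}_{n_{t}};t_{n_{t}},\mathbf{y}_{n_{t}})$ and $\lambda_{\alpha}(t_{n_{t}},\mathbf{y}_{n_{t}})$ are $\mathcal{F}_{t_{n_{t}}}$-measurable and constant with respect to the subsequent multiple integration, so it suffices to show they are equal for every $\alpha\in\Gamma_{\beta}\backslash\{\nu\}$, i.e.\ for $l(\alpha)\le\beta$. For $l(\alpha)=1$ this is immediate, since $\mathbf{p}_{\beta}(t_{n_{t}},\mathbf{y}_{n_{t}};t_{n_{t}},\mathbf{y}_{n_{t}})=\mathbf{f}(t_{n_{t}},\mathbf{y}_{n_{t}})$ and $\mathbf{g}_{j}(t_{n_{t}})$ are exactly $\lambda_{(0)}$ and $\lambda_{(j)}$ for (\ref{WSDE-LLA-1}). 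For $\beta=2$ and $l(\alpha)=2$, writing $\alpha=(j_{1},j_{2})$, I would check the four cases directly: because $\mathbf{p}_{2}(\cdot,\cdot;t_{n_{t}},\mathbf{y}_{n_{t}})$ is affine in its state argument and the $\mathbf{g}_{j}$ do not depend on it, the extra drift and Hessian terms carried by the linear-equation operators $L^{0},L^{j}$ vanish at the base point $(t_{n_{t}},\mathbf{y}_{n_{t}})$, so $L^{j_{1}}\mathbf{p}_{2}$ and $L^{j_{1}}\mathbf{g}_{j_{2}}$ reduce there to $L^{j_{1}}\mathbf{f}$ and $L^{j_{1}}\mathbf{g}_{j_{2}}$, respectively; the only delicate point is that the term $\frac{1}{2}\sum_{j}(\mathbf{I}_{d\times d}\otimes\mathbf{g}_{j}^{\intercal}(t_{n_{t}}))\mathbf{f}_{\mathbf{xx}}(t_{n_{t}},\mathbf{y}_{n_{t}})\mathbf{g}_{j}(t_{n_{t}})$ built into $\mathbf{p}_{2}$ is precisely the second-order part of $L^{0}\mathbf{f}$ at $(t_{n_{t}},\mathbf{y}_{n_{t}})$ and thus supplies exactly the missing piece of $\lambda_{(0,0)}$. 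I expect the first step --- pinning down that the $\Lambda_{\alpha}$ are the It\^{o} coefficient functions of (\ref{WSDE LLEquation}), with correct bookkeeping of which arguments are frozen at $(t_{n_{t}},\mathbf{y}_{n_{t}})$ --- to be the main obstacle; once it is in place, the variation-of-constants computation and the matching of Gaussian laws are routine. $\blacksquare$
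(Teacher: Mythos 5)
Your proposal is correct, and it follows exactly the line the paper itself indicates: the paper does not prove this lemma (it is recalled from the cited Carbonell--Jimenez--Biscay reference), but the remark immediately after it states precisely your key identification --- $\mathbf{z}$ is the solution of the piecewise linear SDE (\ref{WSDE LLEquation})--(\ref{WSDE LLEquation b}), the $\Lambda_{\alpha}$ are its It\^{o} coefficient functions, and (\ref{WSDE-LLA-3}) is its It\^{o}--Taylor expansion, which matches the Gaussian law of (\ref{LLApproximation}) via variation of constants. Your fleshing-out of the details, including the direct check that the second-order correction in $\mathbf{p}_{2}$ supplies the Hessian part of $\lambda_{(0,0)}$ at the base point, is sound.
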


Note that, the stochastic process $\mathbf{z}$ defined in the
previous lemma
is the solution of the piecewise linear SDE (\ref{WSDE LLEquation}%
)-(\ref{WSDE LLEquation b}) and $\Lambda_{\mathbb{\alpha}}(\cdot;t_{n_{t}%
},\mathbf{y}_{n_{t}})$ denotes the Ito coefficient functions
corresponding to that equation. Therefore, (\ref{WSDE-LLA-3}) is the
Ito-Taylor expansion of the process (\ref{LLApproximation}), which
coincides with the Local Linear
discretization (\ref{LLDiscretization}) at each discretization time $t_{n}%
\in(t)_{h}$.

\section{Weak Local Linearization schemes}

It can be noted from its definition that the Local Linear
discretization is still no tractable for numerical implementation
purposes. The reason is that, in general, the integrals appearing in
$\mathbf{\phi}_{\beta}$ and $\mathbf{\eta}$ can not be analytically
computed. Thus, depending on the way of computing these functions,
different numerical schemes could be obtained. A precise definition
for such schemes is the following.

\begin{definition}
For an weak Local Linear discretization $\mathbf{y}_{n+1}=\mathbf{y}%
_{n}+\mathbf{\phi}_{\beta}(t_{n},\mathbf{y}_{n};h_{n})+\mathbf{\eta}%
(t_{n},\mathbf{y}_{n};h_{n})$ of the SDE (\ref{WSDE-LLA-1})-(\ref{WSDE-LLA-2}%
), all recursion of the form
\begin{equation}
\widetilde{\mathbf{y}}_{n+1}=\widetilde{\mathbf{y}}_{n}+\widetilde
{\mathbf{\phi}}_{\beta}(t_{n},\widetilde{\mathbf{y}}_{n};h_{n})+\widetilde
{\mathbf{\eta}}(t_{n},\widetilde{\mathbf{y}}_{n};h_{n}),\text{ \ \ \
\ \ with
}\widetilde{\mathbf{y}}_{0}=\mathbf{y}_{0},\label{GWLLS}%
\end{equation}
is called weak Local Linearization scheme, where $\widetilde{\mathbf{\phi}%
}_{\beta}$ and $\widetilde{\mathbf{\eta}}$ denote numerical
algorithms to compute $\mathbf{\phi}_{\beta}$ and $\mathbf{\eta}$
respectively.
\end{definition}

The first weak LL schemes were derived for autonomous equations SDE
(i.e. $\frac{\partial\mathbf{f}}{\partial
t}=\frac{\partial\mathbf{g}_{i}}{\partial t}\equiv0$). By
integrating by part in (\ref{DeterministicLL}), the LL
discretization (\ref{LLDiscretization}) can be rewritten as
\begin{align}
\mathbf{y}_{n+1} &  =\mathbf{y}_{n}+\left(  \mathbf{f}_{\mathbf{x}}%
(\mathbf{y}_{n})\right)  ^{-1}(e^{\mathbf{f}_{\mathbf{x}}(\mathbf{y}_{n}%
)h_{n}}-\mathbf{I}_{d})\mathbf{f}(\mathbf{y}_{n})\label{ShojiOzakiScheme}\\
&  +\frac{\delta_{\beta}^{2}}{2}\left(  \mathbf{f}_{\mathbf{x}}(\mathbf{y}%
_{n})\right)  ^{-2}(e^{\mathbf{f}_{\mathbf{x}}(\mathbf{y}_{n})h_{n}%
}-\mathbf{I}_{d}-\mathbf{f}_{\mathbf{x}}(\mathbf{y}_{n})h_{n})\sum
\limits_{j=1}^{m}(\mathbf{I}_{d\times
d}\otimes\mathbf{g}_{j}^{\intercal
}\left(  t_{n}\right)  )\mathbf{f}_{\mathbf{xx}}(t_{n},\mathbf{y}%
_{n})\mathbf{g}_{j}\left(  t_{n}\right)  h_{n}\nonumber\\
&  +\mathbf{\eta}(t_{n},\mathbf{y}_{n};h_{n}),\nonumber
\end{align}
which for $\beta=1$ and $\beta=2$ were proposed in \cite{Ozaki
1985a, Ozaki 1992} and \cite{Shoji 1997, Shoji 1998c}, respectively.
For scalar SDEs with
$m$ constant diffusion coefficients $\mathbf{g=[}g_{1},\ldots,g_{m}]\in%
\mathbb{R}
^{m}$, the variance of
$\mathbf{\eta}(t_{n},\mathbf{y}_{t_{n}};h_{n})$ is given by
\[
\mathbf{\Sigma}(t_{n},\mathbf{y}_{t_{n}};h_{n})=\left(  2\mathbf{f}%
_{\mathbf{x}}(\mathbf{y}_{n})\right)  ^{-1}(e^{2\mathbf{f}_{\mathbf{x}%
}(\mathbf{y}_{n})h_{n}}-1\mathbf{)gg}^{\intercal}\mathbf{,}%
\]
which is obtained by integrating by part in (\ref{CovarianceMatrix})
\cite{Ozaki 1992, Shoji 1997}. For multidimensional autonomous SDEs
the variance $\mathbf{\Sigma}$ of
$\mathbf{\eta}(t_{n},\mathbf{y}_{t_{n}};h_{n})$ is approximated by
the numerical solution of the pencil equation \cite{Shoji
1998c}%
\begin{equation}
\mathbf{f}_{\mathbf{x}}(\mathbf{y}_{n})\mathbf{\Sigma+\Sigma f}_{\mathbf{x}%
}^{\intercal}(\mathbf{y}_{n})=e^{\mathbf{f}_{\mathbf{x}}(\mathbf{y}_{n})h_{n}%
}\mathbf{GG}^{\intercal}e^{\left(  \mathbf{f}_{\mathbf{x}}(\mathbf{y}%
_{n})\right)  ^{\top}h_{n}}-\mathbf{GG}^{\intercal},\label{PencilEquation}%
\end{equation}
where $\mathbf{G=[g}_{1},\ldots,\mathbf{g}_{m}]$ is an $d\times m$
matrix of constant entries. However, the numerical implementations
$\widetilde {\mathbf{y}}_{n+1}$ of the LL discretization
(\ref{ShojiOzakiScheme}) (i.e., the corresponding LL schemes) are
not always computationally feasible since
they might eventually fail when the Jacobian matrix $\mathbf{f}_{\mathbf{x}%
}(\widetilde{\mathbf{y}}_{n})$ is singular or ill-conditioned at
some point $\widetilde{\mathbf{y}}_{n}$. Moreover, the equation
(\ref{PencilEquation})
might have no unique solution for some particular $\mathbf{f}_{\mathbf{x}%
}(\widetilde{\mathbf{y}}_{n})$.

For nonautonomous equations, the LL discretization
(\ref{LLDiscretization})
can be written as \cite{Mora05}%
\begin{align*}
\mathbf{y}_{n+1} &  =h_{n}e^{\mathbf{f}_{\mathbf{x}}(t_{n},\mathbf{y}%
_{n})\frac{h_{n}}{2}}(\mathbf{f}(t_{n},\mathbf{y}_{n})-\mathbf{f}_{\mathbf{x}%
}(t_{n},\mathbf{y}_{n})\mathbf{y}_{n}+\frac{h_{n}}{2}\mathbf{f}_{t}%
(t_{n},\mathbf{y}_{n})\\
&  +\frac{h_{n}}{4}\sum\limits_{j=1}^{m}(\mathbf{I}_{d\times
d}\otimes
\mathbf{g}_{j}^{\intercal}\left(  t_{n}\right)  )\mathbf{f}_{\mathbf{xx}%
}(t_{n},\mathbf{y}_{n})\mathbf{g}_{j}\left(  t_{n}\right)  )\\
&  +e^{\mathbf{f}_{\mathbf{x}}(t_{n},\mathbf{y}_{n})h_{n}}\mathbf{y}%
_{n}+\widetilde{\mathbf{\Sigma}}^{1/2}(t_{n},\mathbf{y}_{n};h_{n})\xi
_{n+1}+\mathbf{r}(t_{n+1}),
\end{align*}
where the variance $\mathbf{\Sigma}$ of
$\mathbf{\eta}(t_{n},\widetilde
{\mathbf{y}}_{t_{n}};h_{n})$ is approximated by%
\[
\widetilde{\mathbf{\Sigma}}(t_{n},\mathbf{y}_{n};h_{n})=h_{n}e^{\mathbf{f}%
_{\mathbf{x}}(t_{n},\mathbf{y}_{n})\frac{h_{n}}{2}}\mathbf{G(}t_{n}%
+\frac{h_{n}}{2}\mathbf{)G(}t_{n}+\frac{h_{n}}{2}\mathbf{)}^{\intercal
}e^{\mathbf{f}_{\mathbf{x}}^{\intercal}(t_{n},\mathbf{y}_{n})\frac{h_{n}}{2}},
\]
$\{\xi_{n}\}$ is a sequence of $d$-dimensional i.i.d Gaussian random
vectors, and $\mathbf{r}$ is a remainder term. These expressions can
be obtained after some algebraic manipulations in
(\ref{LLDiscretization}) and by using quadrature formulas for
approximating the integrals (\ref{DeterministicLL})
and (\ref{CovarianceMatrix}) with $\beta=2$. The remainder term $\mathbf{r}%
(t_{n+1})$ represents the error due to these approximations. The LL
schemes $\widetilde{\mathbf{y}}_{n+1}$ that can be obtained by
numerical implementations of the above expression for
$\mathbf{y}_{n+1}$ (and neglecting $\mathbf{r}(t_{n+1})$) overcome
the restrictions for the Jacobian matrix $\mathbf{f}_{\mathbf{x}}$
of the previous ones, but at expense of an additional approximation.

Alternatively, other types of weak LL schemes have be proposed
\cite{Carbonell06}.

For SDEs with constant diffusion coefficients, i.e., for equations
for the form (\ref{WSDE-LLA-1}) with
$\mathbf{g}_{i}(t)\equiv\mathbf{g}_{i}$ for all $t\in\lbrack
t_{0},T]$, Theorem 1 in \cite{Van Loan 1978} implies that
$\mathbf{\phi}_{\beta}$ and $\mathbf{\Sigma}$ can be rewritten as%
\begin{align*}
\mathbf{\phi}_{\beta}(t_{n},\mathbf{y}_{n};h_{n})  & =\int\limits_{0}^{h_{n}%
}e^{\mathbf{f}_{\mathbf{x}}(t_{n},\mathbf{y}_{n})(h_{n}-s)}\mathbf{f}%
(t_{n},\mathbf{y}_{n})ds+\int\limits_{0}^{h_{n}}\int\limits_{0}^{s}%
e^{\mathbf{f}_{\mathbf{x}}(t_{n},\mathbf{y}_{n})(h_{n}-s)}\mathbf{b}_{\beta
}(t_{n},\mathbf{y}_{n})ds_{1}ds\\
& =\mathbf{D}_{14}(t_{n},\mathbf{y}_{n};h_{n}),\\
\mathbf{\Sigma}(t_{n},\mathbf{y}_{n};h_{n})  & =(\int\limits_{0}^{h_{n}%
}e^{\mathbf{f}_{\mathbf{x}}(t_{n},\mathbf{y}_{n})(h_{n}-s)}\mathbf{GG}%
^{\intercal}e^{-\mathbf{f}_{\mathbf{x}}(t_{n},\mathbf{y}_{n})^{\intercal}%
s}ds)\text{ }e^{\mathbf{f}_{\mathbf{x}}(t_{n},\mathbf{y}_{n})^{\intercal}%
h_{n}}\\
&
=\mathbf{D}_{12}(t_{n},\mathbf{y}_{n};h_{n})\mathbf{D}_{11}^{\intercal
}(t_{n},\mathbf{y}_{n};h_{n}),
\end{align*}
where
\[
\mathbf{b}_{\beta}(t_{n},\mathbf{y}_{n})=\mathbf{a}_{\beta}(t_{n}%
,\mathbf{y}_{n})-\mathbf{f}_{\mathbf{x}}(t_{n},\mathbf{y}_{n})\mathbf{y}_{n},
\]
$\mathbf{a}_{\beta}$ is the function defined in
(\ref{LLDiscretization}), $\mathbf{G=[g}_{1},\ldots,\mathbf{g}_{m}]$
is an $d\times m$ matrix, and the block matrix
$\mathbf{D}=(\mathbf{D}_{lj}),$ $l,j=1,\ldots,4$ is defined as
$\mathbf{D(}t_{n}\mathbf{,y}_{n}\mathbf{;}h_{n}\mathbf{)}=e^{\mathbf{C}%
_{\beta}(t_{n},\mathbf{y}_{n})h_{n}}$\textbf{,} with
\[
\mathbf{C}_{\beta}(t_{n},\mathbf{y}_{n})=%
\begin{pmatrix}
\mathbf{f}_{\mathbf{x}}(t_{n},\mathbf{y}_{n}) &
\mathbf{GG}^{\intercal} &
\mathbf{b}_{\beta}(t_{n},\mathbf{y}_{n}) & \mathbf{f}(t_{n},\mathbf{y}_{n})\\
\mathbf{0} &
-\mathbf{f}_{\mathbf{x}}(t_{n},\mathbf{y}_{n})^{\intercal} &
\mathbf{0} & \mathbf{0}\\
\mathbf{0} & \mathbf{0} & 0 & 1\\
\mathbf{0} & \mathbf{0} & 0 & 0
\end{pmatrix}
.
\]
Therefore, the LL discretization (\ref{LLDiscretization}) can be
written as
\begin{equation}
\mathbf{y}_{n+1}=\mathbf{y}_{n}+\mathbf{D}_{14}(t_{n},\mathbf{y}_{n}%
;h_{n})+\mathbf{(D}_{12}\mathbf{(}t_{n},\mathbf{y}_{n};h_{n}\mathbf{)D}%
_{11}^{\intercal}\mathbf{(}t_{n},\mathbf{y}_{n};h_{n}\mathbf{))}^{1/2}%
\xi_{n+1},\label{Scheme1}%
\end{equation}
starting with $\mathbf{y}_{0}=\mathbf{x}_{0}.$ Here\textbf{,
}$\mathbf{\Sigma }^{1/2}$ denotes the square root matrix of
$\mathbf{\Sigma}$ and $\{\xi_{n}\}$ is a sequence of $d$-dimensional
i.i.d Gaussian random vectors.

In general, for SDEs of the form (\ref{WSDE-LLA-1}) with no constant
diffusion
coefficients\textbf{,} the approximation%
\[
\mathbf{G}(t_{n}+s)\approx\mathbf{G}_{\beta}(t_{n}+s),\text{ }s\in
\lbrack0,h_{n}]
\]
provided by the truncated Taylor expansion
\[
\mathbf{G}_{\beta}(t_{n}+s)=\sum\limits_{i=0}^{\beta-1}\frac{d^{i}%
\mathbf{G}(t_{n})}{dt^{i}}s^{i}%
\]
has been considered. In turn, this implies that%
\[
\mathbf{\Sigma}(t_{n},\mathbf{y}_{n};h_{n})\approx\mathbf{\Sigma}_{\beta
}(t_{n},\mathbf{y}_{n};h_{n}),
\]
where%
\begin{align*}
&  \mathbf{\Sigma}_{\beta}(t_{n},\mathbf{y}_{n};h_{n})\\
&  =(\int\limits_{0}^{h_{n}}e^{\mathbf{f}_{\mathbf{x}}(t_{n},\mathbf{y}%
_{n})(h_{n}-s)}\mathbf{H}_{0}\mathbf{(}t_{n}\mathbf{)}e^{-\mathbf{f}%
_{\mathbf{x}}(t_{n},\mathbf{y}_{n})^{\intercal}s}ds\\
&  +\sum\limits_{i=1}^{2\beta-2}\int\limits_{0}^{h_{n}}\int\limits_{0}^{s}%
\int\limits_{0}^{s_{0}}\ldots\int\limits_{0}^{s_{i-2}}e^{\mathbf{f}%
_{\mathbf{x}}(t_{n},\mathbf{y}_{n})(h_{n}-s)}\mathbf{H}_{i}(t_{n}%
)e^{-\mathbf{f}_{\mathbf{x}}(t_{n},\mathbf{y}_{n})^{\intercal}s}ds_{i-1}\ldots
ds_{0}ds)e^{\mathbf{f}_{\mathbf{x}}(t_{n},\mathbf{y}_{n})^{\intercal}h_{n}},
\end{align*}
with%
\[
\mathbf{H}_{i}(t_{n})=\sum\limits_{l+j=i}\frac{d^{l}\mathbf{G}(t_{n})}{dt^{l}%
}\frac{d^{j}\mathbf{G}(t_{n})}{dt^{j}}^{\intercal},\text{
}i=0,\ldots ,2\beta-2.
\]
Hence, by Theorem 1 in \cite{Carbonell08 JCAM} it is obtained%
\begin{align}
\mathbf{\phi}_{\beta}(t_{n},\mathbf{y}_{n};h_{n})  &
=\mathbf{B}_{1,2\beta
+2}(t_{n},\mathbf{y}_{n};h_{n}),\nonumber\\
\mathbf{\Sigma}_{\beta}(t_{n},\mathbf{y}_{n};h_{n})  &
=\mathbf{B}_{1,2\beta
}(t_{n},\mathbf{y}_{n};h_{n})\mathbf{B}_{11}^{\intercal}(t_{n},\mathbf{y}%
_{n};h_{n}),\label{CovarianceMatrixBeta}%
\end{align}
where the block matrix $\mathbf{B}=(\mathbf{B}_{lj})$ is defined as
\begin{equation}
\mathbf{B}(t_{n},\mathbf{y}_{n};h_{n})=e^{\mathbf{A}_{\beta}(t_{n}%
,\mathbf{y}_{n})h_{n}}\label{matrix B WLL scheme}%
\end{equation}
with%
\begin{align*}
& \mathbf{A}_{\beta}(t_{n},\mathbf{y}_{n})\\
& =%
\begin{pmatrix}
\mathbf{f}_{\mathbf{x}}(t_{n},\mathbf{y}_{n}) &
\mathbf{H}_{2\beta-2}(t_{n}) &
\mathbf{H}_{2\beta-3}(t_{n}) & \cdots & \mathbf{H}_{0}(t_{n}) & \mathbf{b}%
_{\beta}(t_{n},\mathbf{y}_{n}) & \mathbf{f}(t_{n},\mathbf{y}_{n})\\
\mathbf{0} &
-\mathbf{f}_{\mathbf{x}}(t_{n},\mathbf{y}_{n})^{\intercal} &
\mathbf{I}_{d} & \cdots & \mathbf{0} & \mathbf{0} & \mathbf{0}\\
\vdots & \vdots &
-\mathbf{f}_{\mathbf{x}}(t_{n},\mathbf{y}_{n})^{\intercal} &
\ddots & \vdots & \vdots & \vdots\\
\mathbf{0} & \mathbf{0} & \mathbf{0} & \ddots & \mathbf{I}_{d} &
\vdots &
\vdots\\
\vdots & \vdots & \vdots & \ddots & -\mathbf{f}_{\mathbf{x}}(t_{n}%
,\mathbf{y}_{n})^{\intercal} & \mathbf{0} & \mathbf{0}\\
\vdots & \vdots & \vdots & \cdots & \mathbf{0} & 0 & 1\\
\mathbf{0} & \mathbf{0} & \mathbf{0} & \mathbf{\cdots} & \mathbf{0}
& 0 & 0
\end{pmatrix}
.
\end{align*}
In this way, the LL discretization (\ref{LLDiscretization}) can be
written as
\begin{align}
\mathbf{y}_{n+1}  & =\mathbf{y}_{n}+\mathbf{B}_{1,2\beta+2}(t_{n}%
,\mathbf{y}_{n};h_{n})\label{Scheme2}\\
& +(\mathbf{B}_{1,2\beta}(t_{n},\mathbf{y}_{n};h_{n})\mathbf{B}_{1,1}%
^{\intercal}(t_{n},\mathbf{y}_{n};h_{n}))^{1/2}\xi_{n+1}\nonumber\\
& +\mathbf{r}(t_{n},\mathbf{y}_{n};h_{n}),\nonumber
\end{align}
where $\mathbf{r}(t_{n},\mathbf{y}_{n};h_{n})=\mathbf{\Sigma}^{1/2}%
(t_{n},\mathbf{y}_{n};h_{n})\xi_{n+1}-\mathbf{\Sigma}_{\beta}^{1/2}%
(t_{n},\mathbf{y}_{n};h_{n})\xi_{n+1}$.

Note that numerical implementations $\widetilde{\mathbf{y}}_{n+1}$
of the expressions (\ref{Scheme1}) and (\ref{Scheme2}) reduce to the
use of a suitable algorithm for computing exponential matrices, for
intance, those based on rational Pad\'{e} approximations or Krylov
subspace method (see \cite{Moler 2003} for an updated review).
Remarkably, these expressions have no restriction on the Jacobian
matrix $\mathbf{f}_{\mathbf{x}}$, and do not involve the use of
quadrature formulas either.

Furthermore, for weak convergence purpose, the sequence
$\{\xi_{n}\}$ of i.i.d Gaussian random vectors can be replaced by
any other sequence of random vectors with similar moment properties.
Thus, in all numerical implementation of the LL discretization,
$\{\xi_{n}\}$ can be replaced by the sequence
$\{\widetilde{\xi}_{n}\}$ of i.i.d. two-points distributed random
vectors with
components $\widetilde{\xi}_{n}^{k}$ satisfying $P(\widetilde{\xi}_{n}^{k}%
=\pm1)=1/2$.

\section{Convergence rate of the Weak Local Linearization schemes}

Clearly, a weak LL scheme will preserve the order $\beta$ of the
underlaying LL discretization if
$\widetilde{\mathbf{\phi}}\mathbf{_{\mathbb{\beta}}}$ and
$\widetilde{\mathbf{\Sigma}}$ are suitable approximations to
$\mathbf{\phi }_{\beta}$ and $\mathbf{\Sigma}$. This requirement is
considered in the following results.

\begin{lemma}
\label{BWLLS}Suppose that the drift and diffusion coefficients of
the SDE (\ref{WSDE-LLA-1})-(\ref{WSDE-LLA-2}) satisfy the conditions
(\ref{W GrowthBoundLemma})-(\ref{W BoundLemma}). Further suppose that%
\[
\left\vert \mathbf{\phi_{\beta}}\left(  t_{n},\widetilde{\mathbf{y}}_{n}%
;h_{n}\right)
-\widetilde{\mathbf{\phi}}\mathbf{_{\mathbb{\beta}}}\left(
t_{n},\widetilde{\mathbf{y}}_{n};h_{n}\right)  \right\vert \leq
K(1+\left\vert
\widetilde{\mathbf{y}}_{n}\right\vert )h_{n}^{\alpha+1}%
\]
and
\[
\left\vert \mathbf{\Sigma}(t_{n},\widetilde{\mathbf{y}}_{n};h_{n}%
)-\widetilde{\mathbf{\Sigma}}(t_{n},\widetilde{\mathbf{y}}_{n};h_{n}%
)\right\vert \leq K(1+\left\vert
\widetilde{\mathbf{y}}_{n}\right\vert )h_{n}^{\gamma+1},
\]
for some positive constant $K$ and natural numbers $\alpha$ and
$\gamma$. If the initial value $\widetilde{\mathbf{y}}_{0}$ of a LL
scheme \thinspace $\widetilde{\mathbf{y}}_{n}$ defined as in
(\ref{GWLLS}) has finite moments of all orders, then
\[
E\left(  \underset{0\leq n\leq n_{T}}{\max}\left\vert \widetilde{\mathbf{y}%
}_{n}\right\vert ^{2j}{\LARGE |}\mathcal{F}_{t_{0}}\right)  \leq
M(1+\left\vert \widetilde{\mathbf{y}}_{0}\right\vert ^{2r})
\]
for some positive constant $M$ and natural numbers $j$ and $r$.
\end{lemma}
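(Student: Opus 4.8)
The plan is to follow the classical pattern for the moment bounds of one-step schemes (see e.g.\ the treatment of the Euler scheme in \cite{Kloeden 1995}): first extract deterministic a~priori bounds on the increment maps $\widetilde{\mathbf{\phi}}_{\beta}$ and $\widetilde{\mathbf{\Sigma}}$; then iterate the recursion (\ref{GWLLS}) and split the resulting sum into a ``drift'' part, controlled by a weighted discrete H\"{o}lder inequality, and a ``noise'' part, controlled as a discrete martingale; and finally close the estimate with a discrete Gronwall inequality. In what follows $K_{1},K_{2},C_{j},\widetilde{C}_{i}$ denote positive constants depending only on $K$, $d$, $m$, $\beta$, $T-t_{0}$ and on $j$.

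\emph{Step 1 (a~priori bounds on the increments).} Since $\vert\mathbf{f}_{\mathbf{x}}\vert\leq K$ by (\ref{W BoundLemma}) and $h_{n}\leq h<1$, one has $\vert e^{\mathbf{f}_{\mathbf{x}}(t_{n},\mathbf{v})u}\vert\leq e^{K}$ for $u\in[0,h_{n}]$. Substituting this together with the growth bound (\ref{W GrowthBoundLemma}) and the derivative bounds (\ref{W BoundLemma}) into the integral representations (\ref{DeterministicLL}) and (\ref{CovarianceMatrix}) --- the Hessian term that appears when $\beta=2$ being exactly the quantity tamed by the factor $\delta_{\beta}^{2}$ in (\ref{W BoundLemma}) --- yields a constant $K_{1}$ with
\[
\vert\mathbf{\phi}_{\beta}(t_{n},\mathbf{v};h_{n})\vert\leq K_{1}(1+\vert\mathbf{v}\vert)h_{n},\qquad\vert\mathbf{\Sigma}(t_{n},\mathbf{v};h_{n})\vert\leq K_{1}(1+\vert\mathbf{v}\vert)h_{n}
\]
for all $n$ and $\mathbf{v}\in\mathbb{R}^{d}$. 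Adding the two approximation hypotheses of the lemma and using $h_{n}^{\alpha+1},h_{n}^{\gamma+1}\leq h_{n}$ gives a constant $K_{2}$ with $\vert\widetilde{\mathbf{\phi}}_{\beta}(t_{n},\mathbf{v};h_{n})\vert\leq K_{2}(1+\vert\mathbf{v}\vert)h_{n}$ and $\vert\widetilde{\mathbf{\Sigma}}(t_{n},\mathbf{v};h_{n})\vert\leq K_{2}(1+\vert\mathbf{v}\vert)h_{n}$. Finally, recalling that conditionally on $\mathcal{F}_{t_{n}}$ the vector $\widetilde{\mathbf{\eta}}(t_{n},\widetilde{\mathbf{y}}_{n};h_{n})$ is centered with covariance $\widetilde{\mathbf{\Sigma}}(t_{n},\widetilde{\mathbf{y}}_{n};h_{n})$ and has the moment structure of a Gaussian vector (resp.\ of the two-point vector $\widetilde{\xi}_{n}$), for every $p\leq j$ there is $c_{p}>0$ with
\[
E\bigl(\vert\widetilde{\mathbf{\eta}}(t_{n},\widetilde{\mathbf{y}}_{n};h_{n})\vert^{2p}\mid\mathcal{F}_{t_{n}}\bigr)\leq c_{p}\vert\widetilde{\mathbf{\Sigma}}(t_{n},\widetilde{\mathbf{y}}_{n};h_{n})\vert^{p}\leq c_{p}K_{2}^{p}(1+\vert\widetilde{\mathbf{y}}_{n}\vert)^{p}h_{n}^{p}.
\]

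\emph{Step 2 (finiteness, recursion and Gronwall).} As $\widetilde{\mathbf{y}}_{0}=\mathbf{y}_{0}$ is $\mathcal{F}_{t_{0}}$-measurable with finite moments of all orders, an induction on $n$ using Step~1 shows that $\mu_{n}:=E(\max_{0\leq k\leq n}\vert\widetilde{\mathbf{y}}_{k}\vert^{2j}\mid\mathcal{F}_{t_{0}})$ is finite for every $n$. Iterating (\ref{GWLLS}), write $\widetilde{\mathbf{y}}_{k}=\widetilde{\mathbf{y}}_{0}+S_{k}^{\phi}+S_{k}^{\eta}$ with $S_{k}^{\phi}=\sum_{i=0}^{k-1}\widetilde{\mathbf{\phi}}_{\beta}(t_{i},\widetilde{\mathbf{y}}_{i};h_{i})$ and $S_{k}^{\eta}=\sum_{i=0}^{k-1}\widetilde{\mathbf{\eta}}(t_{i},\widetilde{\mathbf{y}}_{i};h_{i})$, so that $\max_{k\leq n+1}\vert\widetilde{\mathbf{y}}_{k}\vert^{2j}\leq 3^{2j-1}\bigl(\vert\widetilde{\mathbf{y}}_{0}\vert^{2j}+\max_{k\leq n+1}\vert S_{k}^{\phi}\vert^{2j}+\max_{k\leq n+1}\vert S_{k}^{\eta}\vert^{2j}\bigr)$. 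By Step~1 and the discrete H\"{o}lder inequality with the weights $h_{i}$ (and $\sum_{i}h_{i}=T-t_{0}$),
\[
\max_{k\leq n+1}\vert S_{k}^{\phi}\vert^{2j}\leq K_{2}^{2j}(T-t_{0})^{2j-1}\sum_{i=0}^{n}(1+\vert\widetilde{\mathbf{y}}_{i}\vert)^{2j}h_{i}.
\]
The process $(S_{k}^{\eta})_{k}$ is an $\mathcal{F}_{t_{k}}$-martingale, hence $\vert S_{k}^{\eta}\vert$ is a submartingale; Doob's maximal inequality together with the discrete Burkholder--Davis--Gundy inequality give $E(\max_{k\leq n+1}\vert S_{k}^{\eta}\vert^{2j}\mid\mathcal{F}_{t_{0}})\leq C_{j}E\bigl((\sum_{i=0}^{n}\vert\widetilde{\mathbf{\eta}}(t_{i},\widetilde{\mathbf{y}}_{i};h_{i})\vert^{2})^{j}\mid\mathcal{F}_{t_{0}}\bigr)$, and applying to the inner sum the H\"{o}lder inequality with weights $h_{i}$ (writing $\vert\widetilde{\mathbf{\eta}}(t_{i},\widetilde{\mathbf{y}}_{i};h_{i})\vert^{2}=(\vert\widetilde{\mathbf{\eta}}(t_{i},\widetilde{\mathbf{y}}_{i};h_{i})\vert^{2}h_{i}^{-1})h_{i}$, which produces a factor $h_{i}^{1-j}$) and then the conditional moment bound of Step~1 through the tower property bounds this by $C_{j}^{\prime}\sum_{i=0}^{n}(1+\vert\widetilde{\mathbf{y}}_{i}\vert)^{2j}h_{i}$. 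Using $(1+\vert\widetilde{\mathbf{y}}_{i}\vert)^{2j}\leq 2^{2j-1}(1+\max_{l\leq i}\vert\widetilde{\mathbf{y}}_{l}\vert^{2j})$ and taking $E(\cdot\mid\mathcal{F}_{t_{0}})$ we obtain
\[
1+\mu_{n+1}\leq\widetilde{C}_{1}(1+\vert\widetilde{\mathbf{y}}_{0}\vert^{2j})+\widetilde{C}_{2}\sum_{i=0}^{n}(1+\mu_{i})h_{i}.
\]
Since each $\mu_{n}$ is finite, the discrete Gronwall lemma gives $1+\mu_{n}\leq\widetilde{C}_{1}e^{\widetilde{C}_{2}(T-t_{0})}(1+\vert\widetilde{\mathbf{y}}_{0}\vert^{2j})$ for all $n$, which in particular is the assertion with $r=j$ and $M=\widetilde{C}_{1}e^{\widetilde{C}_{2}(T-t_{0})}$.

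The only genuinely delicate point is the ``noise'' part $S_{k}^{\eta}$: one has to exploit that it is a discrete $\mathcal{F}_{t_{k}}$-martingale, combine Doob's and the discrete Burkholder--Davis--Gundy inequalities, and then turn $(\sum_{i}\vert\widetilde{\mathbf{\eta}}(t_{i},\widetilde{\mathbf{y}}_{i};h_{i})\vert^{2})^{j}$ into the Gronwall-ready sum $\sum_{i}(1+\mu_{i})h_{i}$ via a H\"{o}lder inequality weighted by the $h_{i}$, so that the negative power $h_{i}^{1-j}$ it produces is exactly compensated by the $h_{i}^{p}$ coming from the conditional-Gaussian moments of Step~1. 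The remaining steps are routine manipulations with the triangle inequality and the stated hypotheses.
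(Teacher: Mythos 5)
Your proposal is correct, but it takes a genuinely different route from the paper in its second half. The preliminary estimates of your Step 1 are essentially the paper's own: the authors derive the same linear-growth bounds $\vert\mathbf{\phi}_{\beta}\vert,\vert\mathbf{\Sigma}\vert\leq C(1+\vert\widetilde{\mathbf{y}}_{n}\vert)^{q}h_{n}$ and transfer them to $\widetilde{\mathbf{\phi}}_{\beta},\widetilde{\mathbf{\Sigma}}$ by the triangle inequality and $h_{n}^{\alpha+1},h_{n}^{\gamma+1}\leq h_{n}$, exactly as you do. Where you diverge is in how the moment bound is then obtained: the paper does not iterate the recursion at all; it only verifies two one-step conditions --- that the conditional mean of the increment satisfies $\vert E(\widetilde{\mathbf{\phi}}_{\beta}+\widetilde{\mathbf{\Sigma}}^{1/2}\xi_{n+1}\mid\mathcal{F}_{t_{n}})\vert\leq K_{1}(1+\vert\widetilde{\mathbf{y}}_{n}\vert)h_{n}$ and that the increment itself is bounded by $M(\xi_{n+1})(1+\vert\widetilde{\mathbf{y}}_{n}\vert)h_{n}^{1/2}$ with $M(\xi_{n+1})$ having finite moments of all orders --- and then invokes Lemma 9.1 of Milshtein's book, which delivers the uniform bound on $E(\max_{n}\vert\widetilde{\mathbf{y}}_{n}\vert^{2j})$ directly. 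Your Step 2 (decomposition into $\widetilde{\mathbf{y}}_{0}+S^{\phi}+S^{\eta}$, weighted H\"{o}lder for the drift sum, Doob plus discrete Burkholder--Davis--Gundy plus the tower property for the martingale part, and a discrete Gronwall argument) in effect re-proves the content of that cited lemma from scratch. What the paper's approach buys is brevity and a clean reduction to a standard reference; what yours buys is a self-contained argument that makes the mechanism (martingale maximal estimates compensating the $h_{i}^{1-j}$ weight by the $h_{i}^{j}$ from the conditional noise moments) explicit. Two harmless discrepancies: your bound on $\mathbf{\Sigma}$ carries $(1+\vert\mathbf{v}\vert)$ where the paper has $(1+\vert\mathbf{v}\vert)^{2}$ (either suffices, since for additive noise $\mathbf{G}$ does not depend on the state), and, like the paper, you implicitly treat $h_{i}$ as $\mathcal{F}_{t_{i}}$-measurable when pulling it through conditional expectations on the possibly random grid; this is at the same level of rigor as the original.
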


\begin{proof}
By using conditions (\ref{W GrowthBoundLemma})-(\ref{W BoundLemma})
it is
obtained that%
\[
\left\vert \mathbf{\phi}_{\beta}(t_{n},\widetilde{\mathbf{y}}_{n}%
;h_{n})\right\vert \leq C_{1}(1+\left\vert \widetilde{\mathbf{y}}%
_{n}\right\vert )h_{n}%
\]
and%
\[
\left\vert
\mathbf{\Sigma}(t_{n},\widetilde{\mathbf{y}}_{n};h_{n})\right\vert
\leq C_{2}(1+\left\vert \widetilde{\mathbf{y}}_{n}\right\vert )^{2}h_{n}%
\]
where $C_{1}$ and $C_{2}$\ are positive constants. In this way,%
\begin{align}
\left\vert \widetilde{\mathbf{\phi}}_{\beta}(t_{n},\widetilde{\mathbf{y}}%
_{n};h_{n})\right\vert  & \leq\left\vert \mathbf{\phi}_{\beta}(t_{n}%
,\widetilde{\mathbf{y}}_{n};h_{n})\right\vert +\left\vert \widetilde
{\mathbf{\phi}}_{\beta}(t_{n},\widetilde{\mathbf{y}}_{n};h_{n})-\mathbf{\phi
}_{\beta}(t_{n},\widetilde{\mathbf{y}}_{n};h_{n})\right\vert \nonumber\\
& \leq K_{1}(1+\left\vert \widetilde{\mathbf{y}}_{n}\right\vert )h_{n}%
\label{WSDE-S5}%
\end{align}
and%
\begin{align}
\left\vert \widetilde{\mathbf{\Sigma}}(t_{n},\widetilde{\mathbf{y}}_{n}%
;h_{n})\right\vert  & \leq\left\vert
\mathbf{\Sigma}(t_{n},\widetilde
{\mathbf{y}}_{n};h_{n})\right\vert +\left\vert \widetilde{\mathbf{\Sigma}%
}(t_{n},\widetilde{\mathbf{y}}_{n};h_{n})-\mathbf{\Sigma}(t_{n},\widetilde
{\mathbf{y}}_{n};h_{n})\right\vert \nonumber\\
& \leq K_{2}(1+\left\vert \widetilde{\mathbf{y}}_{n}\right\vert )^{2}%
h_{n},\label{WSDE-S6}%
\end{align}
where $K_{1}=C_{1}+K$ and $K_{2}=C_{2}+K$.

From (\ref{WSDE-S5}) follows that
\begin{align}
\left\vert E\left(
\widetilde{\mathbf{\phi}}_{\beta}(t_{n},\widetilde
{\mathbf{y}}_{n};h_{n})+\widetilde{\mathbf{\Sigma}}(t_{n},\widetilde
{\mathbf{y}}_{n};h_{n})^{1/2}\xi_{n+1}{\LARGE
|}\mathcal{F}_{t_{n}}\right)
\right\vert  & =\left\vert E\left(  \widetilde{\mathbf{\phi}}_{\beta}%
(t_{n},\widetilde{\mathbf{y}}_{n};h_{n}){\LARGE
|}\mathcal{F}_{t_{n}}\right)
\right\vert \nonumber\\
& \leq K_{1}(1+\left\vert \widetilde{\mathbf{y}}_{n}\right\vert )h_{n}%
,\label{WSDE-S3}%
\end{align}
whereas (\ref{WSDE-S6}) implies that%
\begin{align*}
\left\vert \widetilde{\mathbf{\Sigma}}(t_{n},\widetilde{\mathbf{y}}_{n}%
;h_{n})^{1/2}\xi_{n+1}\right\vert ^{2}  & =\xi_{n+1}^{\intercal}%
\widetilde{\mathbf{\Sigma}}(t_{n},\widetilde{\mathbf{y}}_{n};h_{n})\xi_{n+1}\\
& \leq\left\vert \xi_{n+1}^{\intercal}\right\vert \left\vert
\widetilde
{\mathbf{\Sigma}}(t_{n},\widetilde{\mathbf{y}}_{n};h_{n})\right\vert
\left\vert \xi_{n+1}\right\vert \\
& \leq\left\vert \xi_{n+1}\right\vert ^{2}K_{2}(1+\left\vert
\widetilde
{\mathbf{y}}_{n}\right\vert )^{2}h_{n}%
\end{align*}
and so%
\begin{equation}
\left\vert \widetilde{\mathbf{\phi}}_{\beta}(t_{n},\widetilde{\mathbf{y}}%
_{n};h_{n})+\widetilde{\mathbf{\Sigma}}(t_{n},\widetilde{\mathbf{y}}_{n}%
;h_{n})^{1/2}\xi_{n+1}\right\vert \leq
M(\mathbf{\xi}_{n+1})(1+\left\vert
\widetilde{\mathbf{y}}_{n}\right\vert )h_{n}^{1/2},\label{WSDE-S4}%
\end{equation}
where $M(\mathbf{\xi}_{n+1})=(K_{1}+\sqrt{K_{2}}\left\vert \xi_{n+1}%
\right\vert )$ is a random variable\ with finite moments of all
orders.

Inequalities (\ref{WSDE-S3})-(\ref{WSDE-S4}), condition
$E(\left\vert \mathbf{y}_{0}\right\vert ^{j})<\infty$, for all
$j=1,2,...$, and Lemma 9.1 in
\cite{Milshtein 1988} imply that $E\left(  \left\vert \widetilde{\mathbf{y}%
}_{n}\right\vert ^{2j}\right)  $ exists and is uniformly bounded
with respect to $n_{T}$ for all $n=0,\ldots,n_{T}$, which directly
implies the assertion of the theorem.
\end{proof}

The main convergence result is the following.

\begin{theorem}
\label{WSCT}Let $\mathbf{x}$ be the solution of the SDE (\ref{WSDE-LLA-1}%
)-(\ref{WSDE-LLA-2}),
\[
\mathbf{y}_{n+1}=\mathbf{y}_{n}+\mathbf{\phi}_{\beta}(t_{n},\mathbf{y}%
_{n};h_{n})+\mathbf{\Sigma}(t_{n},\mathbf{y}_{n};h_{n})^{1/2}\xi_{n+1}%
\]
the weak Local Linear discretization defined in
(\ref{LLDiscretization}), and
\[
\widetilde{\mathbf{y}}_{n+1}=\widetilde{\mathbf{y}}_{n}+\widetilde
{\mathbf{\phi}}_{\beta}(t_{n},\widetilde{\mathbf{y}}_{n};h_{n})+\widetilde
{\mathbf{\Sigma}}(t_{n},\widetilde{\mathbf{y}}_{n};h_{n})^{1/2}\xi_{n+1}%
\]
a numerical implementation of $\mathbf{y}_{n+1}$, where $\widetilde
{\mathbf{\phi}}_{\mathbb{\beta}}$ and $\widetilde{\mathbf{\Sigma}}$
denote numerical algorithms for computing $\mathbf{\phi}_{\beta}$
and $\mathbf{\Sigma }$. Suppose that
$\widetilde{\mathbf{\phi}}_{\mathbb{\beta}}$ and
$\widetilde{\mathbf{\Sigma}}$ fulfill the local conditions%
\begin{equation}
\left\vert \mathbf{\phi}_{\beta}(t_{n},\widetilde{\mathbf{y}}_{n}%
;h_{n})-\widetilde{\mathbf{\phi}}_{\beta}(t_{n},\widetilde{\mathbf{y}}%
_{n};h_{n})\right\vert \leq K(1+\left\vert \widetilde{\mathbf{y}}%
_{n}\right\vert )h_{n}^{\alpha+1},\label{WSDE-LLA-9}%
\end{equation}
and%
\begin{equation}
\left\vert \mathbf{\Sigma}(t_{n},\widetilde{\mathbf{y}}_{n};h_{n}%
)-\widetilde{\mathbf{\Sigma}}(t_{n},\widetilde{\mathbf{y}}_{n};h_{n}%
)\right\vert \leq K(1+\left\vert
\widetilde{\mathbf{y}}_{n}\right\vert
)h_{n}^{\gamma+1}.\label{WSDE-LLA-10}%
\end{equation}
Then, under the assumptions of Theorem \ref{TheoremConvAppLL}, there
exits a
positive constant $C_{g}$ such that%
\[
\left\vert E\left(  g(\mathbf{x}(T))\right)  -E\left(  g(\widetilde
{\mathbf{y}}_{n_{T}})\right)  \right\vert \leq C_{g}h^{\min\{\alpha
,\beta,\gamma\}},
\]
for all
$g\in\mathcal{C}_{P}^{2(\beta+1)}(\mathbb{R}^{d},\mathbb{R})$.
\end{theorem}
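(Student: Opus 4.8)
The plan is to estimate the global weak error by the triangle inequality
\[
\left\vert E(g(\mathbf{x}(T)))-E(g(\widetilde{\mathbf{y}}_{n_{T}}))\right\vert
\leq\left\vert E(g(\mathbf{x}(T)))-E(g(\mathbf{y}_{n_{T}}))\right\vert
+\left\vert E(g(\mathbf{y}_{n_{T}}))-E(g(\widetilde{\mathbf{y}}_{n_{T}}))\right\vert,
\]
so that Theorem \ref{TheoremConvAppLL} controls the first term by $C_{g}(T-t_{0})h^{\beta}$ and only the second term, comparing the exact LL discretization $\mathbf{y}_{n}$ with its numerical implementation $\widetilde{\mathbf{y}}_{n}$, has to be analyzed. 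Since both recursions use the \emph{same} noise sequence $\{\xi_{n}\}$, this second difference should be handled by a Gronwall/one-step-error argument analogous to the standard weak convergence theorem (e.g.\ Milshtein's Lemma 9.1 / Theorem 1.1 in Chapter 2 of \cite{Milshtein 1988}), not by a fresh Ito-Taylor expansion.

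First I would set up the one-step comparison. Let $\Delta_{n}=\widetilde{\mathbf{\phi}}_{\beta}-\mathbf{\phi}_{\beta}$ evaluated at $(t_{n},\widetilde{\mathbf{y}}_{n};h_{n})$ and similarly $\widetilde{\mathbf{\Sigma}}-\mathbf{\Sigma}$; hypotheses (\ref{WSDE-LLA-9})--(\ref{WSDE-LLA-10}) give $|\Delta_{n}|\leq K(1+|\widetilde{\mathbf{y}}_{n}|)h_{n}^{\alpha+1}$ and the covariance discrepancy is $O(h_{n}^{\gamma+1})$. Passing from a covariance discrepancy of order $\gamma+1$ to a discrepancy in the square-root factors multiplying $\xi_{n+1}$ costs ``half an order'' in the generic case but — and this is the key point — for the conditional moments that enter the weak error only the covariance itself appears, so one gets: the conditional mean of the one-step increment of $\widetilde{\mathbf{y}}$ differs from that of $\mathbf{y}$ by $O(h_{n}^{\alpha+1})$ (since $\xi_{n+1}$ is centered, the $\widetilde{\mathbf{\Sigma}}^{1/2}\xi_{n+1}$ terms drop out of the conditional mean), and the conditional second moments differ by $O(h_{n}^{\gamma+1})+O(h_{n}^{\alpha+1})$, while higher conditional moments are controlled because $\widetilde{\mathbf{\Sigma}}$ is close to $\mathbf{\Sigma}=O(h_{n})$. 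Lemma \ref{BWLLS} supplies the uniform bound $E(\max_{n}|\widetilde{\mathbf{y}}_{n}|^{2j}\mid\mathcal{F}_{t_{0}})\leq M(1+|\widetilde{\mathbf{y}}_{0}|^{2r})$, which is exactly the moment bound required to make the generic weak-convergence machinery run.

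The core step is then to invoke the fundamental weak-convergence theorem in the form: if two one-step schemes driven by the same data have conditional moments of the increments that agree up to errors $O(h_{n}^{p+1})$ (mean) and $O(h_{n}^{q+1})$ (second moments), with all moments of the iterates bounded, then the global weak error is $O(h^{\min\{p,q\}})$. Here $p=\alpha$ from (\ref{WSDE-LLA-9}) and $q=\min\{\alpha,\gamma\}$ from combining (\ref{WSDE-LLA-9})--(\ref{WSDE-LLA-10}), together with the already-established $O(h^{\beta})$ from the discretization side; taking the worst of these three yields the claimed rate $h^{\min\{\alpha,\beta,\gamma\}}$. Concretely one writes $u(t_{n},\mathbf{x})=E(g(\mathbf{x}(T))\mid\mathbf{x}(t_{n})=\mathbf{x})$, uses its smoothness (guaranteed, under the $\mathcal{C}_{P}^{2(\beta+1)}$ hypotheses of Theorem \ref{TheoremConvAppLL}, by the same regularity theory used there), Taylor-expands $u(t_{n+1},\cdot)$ around the two increments, takes conditional expectations so that the matching low-order moment terms cancel, sums the resulting local errors over $n=0,\dots,n_{T}-1$ with $\sum h_{n}=T-t_{0}$, and closes with a discrete Gronwall inequality; the polynomial-growth remainders are absorbed using Lemma \ref{BWLLS}.

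The main obstacle I anticipate is the bookkeeping that turns an $h_{n}^{\gamma+1}$ bound on $\mathbf{\Sigma}-\widetilde{\mathbf{\Sigma}}$ into a usable bound on the relevant moments of $(\widetilde{\mathbf{\Sigma}}^{1/2}-\mathbf{\Sigma}^{1/2})\xi_{n+1}$: a naive matrix-square-root perturbation estimate loses a factor $h_{n}^{-1/2}$ (because $\mathbf{\Sigma}\sim h_{n}$), which would degrade $\gamma$ to $\gamma-1/2$. The resolution, and the reason the statement is clean, is that for the weak error one never needs the square root pathwise — one only needs that $E((\widetilde{\mathbf{\Sigma}}^{1/2}\xi_{n+1})(\widetilde{\mathbf{\Sigma}}^{1/2}\xi_{n+1})^{\intercal}\mid\mathcal{F}_{t_{n}})=\widetilde{\mathbf{\Sigma}}$ (using $E(\xi_{n+1}\xi_{n+1}^{\intercal})=\mathbf{I}$) and that all higher moments of $\widetilde{\mathbf{\Sigma}}^{1/2}\xi_{n+1}$ are $O(h_{n})$, both of which follow directly from (\ref{WSDE-LLA-10}) and the moment properties of $\xi_{n+1}$ without ever differencing the square roots. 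Verifying carefully that this suffices for the third-order (and higher) Taylor remainder terms in the expansion of $u$, and that no hidden $h_{n}^{-1/2}$ creeps in, is where I would spend most of the effort.
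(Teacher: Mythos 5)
The pivot of your argument is a ``fundamental weak-convergence theorem'' for two discrete one-step schemes whose conditional increment moments agree to orders $\alpha+1$ (mean) and $\gamma+1$ (covariance); no such off-the-shelf result exists in the form you invoke. The standard theorems (Milshtein; Theorem 14.5.2 in \cite{Kloeden 1995}) compare a scheme against the \emph{exact solution}, and their proofs hinge on smoothness and polynomial-growth bounds for $u(t,\mathbf{x})=E(g(\mathbf{x}(T))\mid\mathbf{x}(t)=\mathbf{x})$, which come from the SDE, not from a discrete chain. If you really use the triangle split and compare $\mathbf{y}_{n}$ with $\widetilde{\mathbf{y}}_{n}$, the telescoping argument needs $h$-uniform regularity of the discrete backward functions $v_{n}(\mathbf{x})=E(g(\mathbf{y}_{n_{T}})\mid\mathbf{y}_{n}=\mathbf{x})$ of the LL chain, which you neither prove nor cite; Theorem \ref{TheoremConvAppLL} is a global error bound and does not supply it. Your ``concrete'' paragraph quietly switches to the exact-solution $u$, but then the telescoping compares $\widetilde{\mathbf{y}}$ with $\mathbf{x}$ directly, the triangle split (and Theorem \ref{TheoremConvAppLL}) becomes superfluous, and the local one-step error must be measured against the conditional moments of the exact flow: for that you need the \emph{local} order-$\beta$ matching of the LL increment with the Ito--Taylor expansion of (\ref{WSDE-LLA-1}) — i.e., Lemma \ref{LemmaLL} combined with Lemma 5.11.7 of \cite{Kloeden 1995} applied to the auxiliary linear SDE (\ref{WSDE-LLA-23}) — which is precisely the ingredient your sketch never secures; the phrase ``the already-established $O(h^{\beta})$ from the discretization side'' does not substitute for it.

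For comparison, the paper follows the second structure throughout: Lemma \ref{BWLLS} gives the uniform moment bounds; the scheme step is decomposed as the exact LL step from $\widetilde{\mathbf{y}}_{n}$ plus the perturbations $\widetilde{\mathbf{\phi}}_{\beta}-\mathbf{\phi}_{\beta}$ and $(\widetilde{\mathbf{\Sigma}}^{1/2}-\mathbf{\Sigma}^{1/2})\xi_{n+1}$; the conditional expectations of the moment functionals $\mathbf{F}_{\mathbf{p}}$ of the increments are compared with those of the Ito--Taylor truncation, split as $e_{1}=O(h^{\beta+1})$ (Lemma \ref{LemmaLL} plus Lemma 5.11.7) and $e_{2}=O(h^{\min\{\alpha,\gamma\}+1})$ (Cauchy--Schwarz, Lemma 10 of \cite{Carbonell06}, and the square-root perturbation bounds of \cite{Sun92} to get a pathwise $L^{2}$ bound on $(\widetilde{\mathbf{\Sigma}}^{1/2}-\mathbf{\Sigma}^{1/2})\xi_{n+1}$); Theorem 14.5.2 then concludes. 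Your idea of bypassing the square-root differencing by matching conditional moments only is a genuine alternative and is clean when $\xi_{n}$ is Gaussian, since then every product functional $\mathbf{F}_{\mathbf{p}}$ required by Theorem 14.5.2 depends only on $\widetilde{\mathbf{\phi}}_{\beta}$ and $\widetilde{\mathbf{\Sigma}}$; but as written you verify only means and second moments and gesture at the rest, and for the two-point distributed $\xi_{n}$ mentioned in the paper the fourth-order product moments of $\widetilde{\mathbf{\Sigma}}^{1/2}\xi_{n+1}$ are not functions of $\widetilde{\mathbf{\Sigma}}$ alone, so the shortcut needs explicit checking there. The paper's pathwise bound handles all the functionals at once, which is exactly why it differences the square roots.
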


\begin{proof}
From Lemma \ref{BWLLS} we have%
\[
E\left(  \underset{0\leq n\leq n_{T}}{\max}\left\vert \widetilde{\mathbf{y}%
}_{n}\right\vert ^{2j}{\LARGE |}\mathcal{F}_{t_{0}}\right)  \leq
K_{1}(1+\left\vert \widetilde{\mathbf{y}}_{0}\right\vert ^{2r})
\]
for some positive constant $K_{1}$ and natural numbers $j$ and $r$.

On the other hand, noted that
\begin{equation}
\widetilde{\mathbf{y}}_{n+1}=\widetilde{\mathbf{z}}_{n+1}+\widetilde
{\mathbf{\phi}}_{\beta}(t_{n},\widetilde{\mathbf{y}}_{n};h_{n})-\mathbf{\phi
}_{\beta}(t_{n},\widetilde{\mathbf{y}}_{n};h_{n})+(\widetilde{\mathbf{\Sigma}%
}(t_{n},\widetilde{\mathbf{y}}_{n};h_{n})^{1/2}-\mathbf{\Sigma}(t_{n}%
,\widetilde{\mathbf{y}}_{n};h_{n})^{1/2})\xi_{n+1},\label{YTruncAux}%
\end{equation}
where
\[
\widetilde{\mathbf{z}}_{n+1}=\widetilde{\mathbf{y}}_{n}+\mathbf{\phi}_{\beta
}(t_{n},\widetilde{\mathbf{y}}_{n};h_{n})+\mathbf{\Sigma}(t_{n},\widetilde
{\mathbf{y}}_{n};h_{n})^{1/2}\xi_{n+1}%
\]
is the solution of the linear SDE%
\begin{align}
d\widetilde{\mathbf{z}}(t)  & =\mathbf{p}_{\beta}(t,\widetilde{\mathbf{z}%
}\mathbf{(}t\mathbf{);}t_{n},\widetilde{\mathbf{y}}_{n})dt+\mathbf{G}%
(t)d\mathbf{w}(t)\label{WSDE-LLA-23}\\
\widetilde{\mathbf{z}}(t_{n})  &
=\widetilde{\mathbf{y}}_{n}\nonumber
\end{align}
for all $t\in\lbrack t_{n},t_{n+1}]$ and $t_{n}\in(t)_{h}$, where
the function $\mathbf{p}_{\beta}$ is defined as in Lemma
\ref{LemmaLL}. Thus,
\begin{align*}
\widetilde{\mathbf{y}}_{n+1}-\widetilde{\mathbf{y}}_{n}  &
=\widetilde
{\mathbf{z}}_{n+1}-\widetilde{\mathbf{z}}_{n}+\widetilde{\mathbf{\phi}}%
_{\beta}(t_{n},\widetilde{\mathbf{y}}_{n};h_{n})-\mathbf{\phi}_{\beta}%
(t_{n},\widetilde{\mathbf{y}}_{n};h_{n})+(\widetilde{\mathbf{\Sigma}}%
(t_{n},\widetilde{\mathbf{y}}_{n};h_{n})^{1/2}\\
&
-\mathbf{\Sigma}(t_{n},\widetilde{\mathbf{y}}_{n};h_{n})^{1/2})\xi_{n+1}.
\end{align*}
From this and the algebraic inequality
$(a+b)^{2j}\leq2^{2j-1}(a^{2j}+b^{2j})$ it is obtained that
\begin{align}
E\left(  \left\vert \widetilde{\mathbf{y}}_{n+1}-\widetilde{\mathbf{y}}%
_{n}\right\vert ^{2j}{\LARGE |}\mathcal{F}_{t_{n}}\right)   &  \leq
2^{2j-1}E\left(  \left\vert \widetilde{\mathbf{z}}_{n+1}-\widetilde
{\mathbf{z}}_{n}\right\vert ^{2j}{\LARGE
|}\mathcal{F}_{t_{n}}\right)
\label{WSDE-LLA-20}\\
&  +2^{4j-2}E\left(  \left\vert \widetilde{\mathbf{\phi}}_{\beta}%
(t_{n},\widetilde{\mathbf{y}}_{n};h_{n})-\mathbf{\phi}_{\beta}(t_{n}%
,\widetilde{\mathbf{y}}_{n};h_{n})\right\vert ^{2j}{\LARGE |}\mathcal{F}%
_{t_{n}}\right)  \nonumber\\
&  +2^{4j-2}E\left(  \left\vert (\widetilde{\mathbf{\Sigma}}(t_{n}%
,\widetilde{\mathbf{y}}_{n};h_{n})^{1/2}-\mathbf{\Sigma}(t_{n},\widetilde
{\mathbf{y}}_{n};h_{n})^{1/2})\xi_{n+1}\right\vert ^{2j}{\LARGE |}%
\mathcal{F}_{t_{n}}\right)  \nonumber
\end{align}
By Theorem 4.5.4 in \cite{Kloeden 1995} follows that%
\begin{equation}
E\left(  \left\vert \widetilde{\mathbf{z}}_{n+1}-\widetilde{\mathbf{z}}%
_{n}\right\vert ^{2j}{\LARGE |}\mathcal{F}_{t_{n}}\right)  \leq K_{2}%
(1+\left\vert \widetilde{\mathbf{y}}_{n}\right\vert ^{2j})h_{n}^{j}%
,\label{WSDE-LLA-5}%
\end{equation}
where $K_{2}$ is a positive constant. From condition
(\ref{WSDE-LLA-9}), and
by using that $h_{n}^{2j(\alpha+1/2)}<1$, it is obtained that%
\begin{equation}
E\left(  \left\vert
\widetilde{\mathbf{\phi}}_{\beta}(t_{n},\widetilde
{\mathbf{y}}_{n};h_{n})-\mathbf{\phi}_{\beta}(t_{n},\widetilde{\mathbf{y}}%
_{n};h_{n})\right\vert ^{2j}{\LARGE |}\mathcal{F}_{t_{n}}\right)
\leq
K_{3}(1+\left\vert \widetilde{\mathbf{y}}_{n}\right\vert ^{2j})h_{n}%
^{j},\label{WSDE-LLA-19}%
\end{equation}
where $K_{3}=2^{2j-1}K$. Furthermore, due to the perturbation bounds
for the Cholesky and SVD factorizations (Theorems 2.2.1 and 3.2.1 in
\cite{Sun92})
there exists a positive constant $C$ such that%
\begin{equation}
\left\vert \widetilde{\mathbf{\Sigma}}(t_{n},\widetilde{\mathbf{y}}_{n}%
;h_{n})^{1/2}-\mathbf{\Sigma}(t_{n},\widetilde{\mathbf{y}}_{n};h_{n}%
)^{1/2}\right\vert \leq C\left\vert \widetilde{\mathbf{\Sigma}}(t_{n}%
,\widetilde{\mathbf{y}}_{n};h_{n})-\mathbf{\Sigma}(t_{n},\widetilde
{\mathbf{y}}_{n};h_{n})\right\vert .\label{WSDE-LLA-21}%
\end{equation}
From this, condition (\ref{WSDE-LLA-10}) and taking in to account
that
$h_{n}^{2j(\gamma+1/2)}<1$ it is obtained that%
\begin{align}
&  E\left(  \left\vert ((\mathbf{\Sigma}(t_{n},\widetilde{\mathbf{y}}%
_{n};h_{n})^{1/2}-\widetilde{\mathbf{\Sigma}}(t_{n},\widetilde{\mathbf{y}}%
_{n};h_{n})^{1/2})\xi_{n+1}\right\vert ^{2j}{\LARGE |}\mathcal{F}_{t_{n}%
}\right)  \nonumber\\
&  \leq\left\vert \mathbf{\Sigma}(t_{n},\widetilde{\mathbf{y}}_{n}%
;h_{n})^{1/2}-\widetilde{\mathbf{\Sigma}}(t_{n},\widetilde{\mathbf{y}}%
_{n};h_{n})^{1/2}\right\vert ^{2j}E\left\vert \xi_{n+1}\right\vert
^{2j}\nonumber\\
&  \leq K_{4}(1+\left\vert \widetilde{\mathbf{y}}_{n}\right\vert ^{2j}%
)h_{n}^{j},\label{WSDE-LLA-18}%
\end{align}
where $K_{4}=2^{2j-1}(CK)^{2j}E\left\vert \xi_{n+1}\right\vert
^{2j}$. Inequalities (\ref{WSDE-LLA-20})-(\ref{WSDE-LLA-19}) and
(\ref{WSDE-LLA-18})
yield to%
\begin{equation}
E\left(  \left\vert \widetilde{\mathbf{y}}_{n+1}-\widetilde{\mathbf{y}}%
_{n}\right\vert ^{2j}{\LARGE |}\mathcal{F}_{t_{n}}\right)  \leq K_{5}%
(1+\left\vert \widetilde{\mathbf{y}}_{n}\right\vert ^{2j})h_{n}^{j}%
,\label{WSDE-LLA-6}%
\end{equation}
where $K_{5}$ is a positive constant.

In addition, by the triangular inequality, we have that%
\[
\left\vert E\left(  \mathbf{F}_{\mathbf{p}}(\widetilde{\mathbf{y}}%
_{n+1}-\widetilde{\mathbf{y}}_{n})-\mathbf{F}_{\mathbf{p}}(\sum\limits_{\alpha
\in\Gamma_{\beta}/\{\nu\}}I_{\alpha}[\lambda_{\alpha}(t_{n},\widetilde
{\mathbf{y}}_{n})]_{t_{n},t_{n+1}}){\LARGE
|}\mathcal{F}_{t_{n}}\right) \right\vert \leq e_{1}+e_{2},
\]
where%
\[
e_{1}=\left\vert E\left(  \mathbf{F}_{\mathbf{p}}(\widetilde{\mathbf{z}}%
_{n+1}-\widetilde{\mathbf{y}}_{n})-\mathbf{F}_{\mathbf{p}}(\sum\limits_{\alpha
\in\Gamma_{\beta}/\{\nu\}}I_{\alpha}[\lambda_{\alpha}(t_{n},\widetilde
{\mathbf{y}}_{n})]_{t_{n},t_{n+1}}){\LARGE
|}\mathcal{F}_{t_{n}}\right) \right\vert ,
\]%
\[
e_{2}=\left\vert E(\mathbf{F}_{\mathbf{p}}(\widetilde{\mathbf{y}}%
_{n+1}-\widetilde{\mathbf{y}}_{n})-\mathbf{F}_{\mathbf{p}}(\widetilde
{\mathbf{z}}_{n+1}-\widetilde{\mathbf{y}}_{n}){\LARGE |}\mathcal{F}_{t_{n}%
})\right\vert
\]
and $\lambda_{\alpha}$ denotes the Ito coefficient function
corresponding to the SDE (\ref{WSDE-LLA-1}). Then, by applying Lemma
5.11.7 in \cite{Kloeden
1995} to the equation (\ref{WSDE-LLA-23}) it is obtained%
\[
\left\vert E\left(  \mathbf{F}_{\mathbf{p}}(\widetilde{\mathbf{z}}%
_{n+1}-\widetilde{\mathbf{y}}_{n})-\mathbf{F}_{\mathbf{p}}(\sum\limits_{\alpha
\in\Gamma_{\beta}/\{\nu\}}I_{\alpha}[\Lambda_{\alpha}(t_{n},\widetilde
{\mathbf{y}}_{n})]_{t_{n},t_{n+1}}){\LARGE
|}\mathcal{F}_{t_{n}}\right) \right\vert \leq K(1+\left\vert
\widetilde{\mathbf{y}}_{n}\right\vert ^{2r})h_{n}^{\beta+1},
\]
which by Lemma \ref{LemmaLL} is equivalent to%
\[
e_{1}\leq K(1+\left\vert \widetilde{\mathbf{y}}_{n}\right\vert ^{2r}%
)h_{n}^{\beta+1},
\]
where $\Lambda_{\alpha}$ denotes the Ito coefficient function
corresponding to the SDE (\ref{WSDE-LLA-23}). From Lemma 10 in
\cite{Carbonell06}, inequalities (\ref{WSDE-LLA-5}) and
(\ref{WSDE-LLA-6}), and the Cauchy-Buniakovski
inequality follows that%
\begin{align*}
e_{2} &  \leq(E\left(  \left\vert
\widetilde{\mathbf{z}}_{n+1}-\widetilde
{\mathbf{y}}_{n+1}\right\vert ^{2}{\LARGE
|}\mathcal{F}_{t_{n}}\right)
)^{1/2}\\
&  \cdot\sum\limits_{j=0}^{l(\mathbf{p})-1}(E\left(  \left\vert
\widetilde
{\mathbf{z}}_{n+1}-\widetilde{\mathbf{y}}_{n}\right\vert ^{4j}{\LARGE |}%
\mathcal{F}_{t_{n}}\right)  )^{1/4}(E\left(  \left\vert \widetilde{\mathbf{y}%
}_{n+1}-\widetilde{\mathbf{y}}_{n}\right\vert ^{l(\mathbf{p})-1-j}%
{\LARGE |}\mathcal{F}_{t_{n}}\right)  )^{1/4}\\
&  \leq K_{6}(1+\left\vert \widetilde{\mathbf{y}}_{n}\right\vert ^{2r_{1}%
})(E\left(  \left\vert \widetilde{\mathbf{z}}_{n+1}-\widetilde{\mathbf{y}%
}_{n+1}\right\vert ^{2}{\LARGE |}\mathcal{F}_{t_{n}}\right) )^{1/2},
\end{align*}
where $K_{6}$ is positive constant and $r_{1}$ a natural number. By
using the expression (\ref{YTruncAux}), inequality
(\ref{WSDE-LLA-21}) and conditions
(\ref{WSDE-LLA-9})-(\ref{WSDE-LLA-10}) follows that%
\begin{align*}
E\left(  \left\vert \widetilde{\mathbf{z}}_{n+1}-\widetilde{\mathbf{y}}%
_{n+1}\right\vert ^{2}{\LARGE |}\mathcal{F}_{t_{n}}\right)   &
\leq2E\left(
\left\vert \mathbf{\phi}_{\beta}(t_{n},\widetilde{\mathbf{y}}_{n}%
;h_{n})-\widetilde{\mathbf{\phi}}_{\beta}(t_{n},\widetilde{\mathbf{y}}%
_{n};h_{n})\right\vert ^{2}{\LARGE |}\mathcal{F}_{t_{n}}\right) \\
& +2E\left(  \left\vert (\mathbf{\Sigma}(t_{n},\widetilde{\mathbf{y}}%
_{n};h_{n})^{1/2}-\widetilde{\mathbf{\Sigma}}(t_{n},\widetilde{\mathbf{y}}%
_{n};h_{n})^{1/2})\xi_{n+1}\right\vert ^{2}{\LARGE |}\mathcal{F}_{t_{n}%
}\right) \\
& \leq2K^{2}(1+C^{2})(1+\left\vert
\widetilde{\mathbf{y}}_{n}\right\vert
^{2})h_{n}^{2\min\{\alpha+1,\gamma+1\}},
\end{align*}
and so%
\[
e_{2}\leq K_{7}(1+\left\vert \widetilde{\mathbf{y}}_{n}\right\vert ^{2r_{2}%
})h_{n}^{\min\{\alpha+1,\gamma+1\}},
\]
where $K_{7}$ is positive constant and $r_{2}$ a natural number.
Hence,
\begin{align*}
& \left\vert E\left(  \mathbf{F}_{\mathbf{p}}(\widetilde{\mathbf{y}}%
_{n+1}-\widetilde{\mathbf{y}}_{n})-\mathbf{F}_{\mathbf{p}}(\sum\limits_{\alpha
\in\Gamma_{\beta}/\{\nu\}}I_{\alpha}[\lambda_{\alpha}(t_{n},\widetilde
{\mathbf{y}}_{n})]_{t_{n},t_{n+1}}){\LARGE
|}\mathcal{F}_{t_{n}}\right)
\right\vert \\
& \leq(K+K_{7})(1+\left\vert \widetilde{\mathbf{y}}_{n}\right\vert ^{2r}%
)h_{n}^{1+\min\{\alpha,\beta,\gamma\}},
\end{align*}
where $r$ is a natural number.

The proof concludes by applying Theorem 14.5.2 in \cite{Kloeden
1995}.
\end{proof}

In order to show the application of previous theorems let us
consider the numerical implementation of the LL discretization
(\ref{Scheme2}) by means of the Pad\'{e} approximation with the
"scaling and squaring" procedure \cite{Moler 2003}.

\begin{theorem}
\label{Theorem Conv LL Pade SDE}Let
\[
\widetilde{\mathbf{B}}(t_{n},\widetilde{\mathbf{y}}_{n};h_{n})=(\mathbf{P}%
_{p,q}(2^{-k_{n}}\mathbf{A}_{\beta}(t_{n},\widetilde{\mathbf{y}}_{n}%
)h_{n}))^{2^{k_{n}}},
\]
where
$\mathbf{P}_{p,q}(2^{-k_{n}}\mathbf{A}_{\beta}(t_{n},\widetilde
{\mathbf{y}}_{n})h_{n})$ denotes the $(p,q)$-Pad\'{e} approximation
of
$e^{2^{-k_{n}}\mathbf{A}_{\beta}(t_{n},\widetilde{\mathbf{y}}_{n})h_{n}}$,
$\mathbf{A}_{\beta}(t_{n},\widetilde{\mathbf{y}}_{n})$ the matrix
defined in (\ref{matrix B WLL scheme}), and $k_{n}$ the smallest
integer number such that
$\left\vert 2^{-k}\mathbf{A}_{\mathbf{\beta}}(t_{n},\widetilde{\mathbf{y}}%
_{n})h_{n}\right\vert \leq\frac{1}{2}$. If the drift and diffusion
coefficients of (\ref{WSDE-LLA-1}) are of class
$\mathcal{C}_{P}^{2(\beta+1)}$ and have uniformly bounded second
derivatives, then the error of the weak LL
scheme%
\begin{equation}
\widetilde{\mathbf{y}}_{n+1}=\widetilde{\mathbf{y}}_{n}+\widetilde{\mathbf{B}%
}_{1,2\beta+2}(t_{n},\widetilde{\mathbf{y}}_{n};h_{n})+\mathbf{(}%
\widetilde{\mathbf{B}}_{1,2\beta}\mathbf{(}t_{n},\widetilde{\mathbf{y}}%
_{n};h_{n}\mathbf{)}\widetilde{\mathbf{B}}_{1,1}^{\intercal}\mathbf{(}%
t_{n},\widetilde{\mathbf{y}}_{n};h_{n}\mathbf{))}^{1/2}\xi_{n+1}%
,\label{WLLS Pade NoAuto}%
\end{equation}
is given by
\[
\left\vert E\left(  g(\mathbf{x}(T))\right)  -E(g(\widetilde{\mathbf{y}%
}_{n_{T}}))\right\vert \leq C_{g}h^{\min\{\beta,p+q\}},
\]
for all
$g\in\mathcal{C}_{P}^{2(\beta+1)}(\mathbb{R}^{d},\mathbb{R})$, where
$\mathbf{x}$ is the solution of
(\ref{WSDE-LLA-1})-(\ref{WSDE-LLA-2}) and $C_{g}$ is a positive
constant.
\end{theorem}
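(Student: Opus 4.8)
The plan is to deduce the statement from Theorem~\ref{WSCT}, by verifying that the algorithms $\widetilde{\phi}_\beta=\widetilde{\mathbf{B}}_{1,2\beta+2}$ and $\widetilde{\Sigma}=\widetilde{\mathbf{B}}_{1,2\beta}\widetilde{\mathbf{B}}_{1,1}^{\intercal}$ of the scheme (\ref{WLLS Pade NoAuto}) satisfy the local error conditions (\ref{WSDE-LLA-9})--(\ref{WSDE-LLA-10}). By (\ref{CovarianceMatrixBeta}) (Theorem~1 in \cite{Carbonell08 JCAM}) we have the exact identities $\phi_\beta(t_n,\widetilde{\mathbf{y}}_n;h_n)=\mathbf{B}_{1,2\beta+2}(t_n,\widetilde{\mathbf{y}}_n;h_n)$ and $\Sigma_\beta(t_n,\widetilde{\mathbf{y}}_n;h_n)=\mathbf{B}_{1,2\beta}(t_n,\widetilde{\mathbf{y}}_n;h_n)\mathbf{B}_{1,1}^{\intercal}(t_n,\widetilde{\mathbf{y}}_n;h_n)$ with $\mathbf{B}=e^{\mathbf{A}_\beta h_n}$, so the problem splits into controlling (i) the error $\|\mathbf{B}-\widetilde{\mathbf{B}}\|$ of the scaling-and-squaring Padé approximation, and (ii) the truncation error $\|\Sigma-\Sigma_\beta\|$ coming from $\mathbf{G}\approx\mathbf{G}_\beta$. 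Throughout I will use that the factor $1+|\widetilde{\mathbf{y}}_n|$ in (\ref{WSDE-LLA-9})--(\ref{WSDE-LLA-10}) may be replaced by any polynomial $1+|\widetilde{\mathbf{y}}_n|^{s}$, since Lemma~\ref{BWLLS} gives finite moments of all orders for $\widetilde{\mathbf{y}}_n$ and the proof of Theorem~\ref{WSCT} only uses polynomial growth.

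The key structural fact is that $\mathbf{A}_\beta(t_n,\widetilde{\mathbf{y}}_n)$ is block upper-triangular with diagonal blocks $\mathbf{f}_{\mathbf{x}}$, $-\mathbf{f}_{\mathbf{x}}^{\intercal}$, $0$ all uniformly bounded, whereas the only blocks growing with $|\widetilde{\mathbf{y}}_n|$ --- namely $\mathbf{b}_\beta$ and $\mathbf{f}$ (as well as $\mathbf{H}_i$, $\mathbf{I}_d$) --- sit in the strictly block upper-triangular, hence nilpotent of index $\nu=2\beta+2$, part. Writing $\mathbf{A}_\beta h_n=\Lambda+\mathbf{N}$ with $\|\Lambda\|\le K$ and $\|\mathbf{N}\|\le K(1+|\widetilde{\mathbf{y}}_n|)$, a Duhamel expansion terminating after $\nu$ terms yields $\|e^{\theta\mathbf{A}_\beta h_n}\|\le C(1+|\widetilde{\mathbf{y}}_n|)^{\nu-1}$ uniformly in $\theta\in[0,1]$ and $h_n\in(0,1)$; the same bound survives for the perturbed exponent $\theta(\mathbf{A}_\beta h_n+\mathbf{E}_n)$, where $\mathbf{E}_n$ is the scaling-and-squaring backward error, because $\mathbf{E}_n$ inherits the same block triangular structure (bounded diagonal, growth confined to the nilpotent part). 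This replaces the useless naive estimate $\|e^{\mathbf{A}_\beta h_n}\|\le e^{\|\mathbf{A}_\beta h_n\|}$.

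Next I would bound $\|\mathbf{B}-\widetilde{\mathbf{B}}\|$. Since $k_n$ is chosen so that $\|2^{-k_n}\mathbf{A}_\beta h_n\|\le 1/2$, the classical estimate for $(p,q)$-Padé approximants (\cite{Moler 2003}) gives $\widetilde{\mathbf{B}}=e^{\mathbf{A}_\beta h_n+\mathbf{E}_n}$ with $\|\mathbf{E}_n\|\le c_{p,q}\,2^{-k_n(p+q)}\|\mathbf{A}_\beta h_n\|^{p+q+1}$; the identity $e^{X+E}-e^{X}=\int_0^1 e^{u(X+E)}E\,e^{(1-u)X}\,du$ and the bounds of the previous paragraph then give $\|\mathbf{B}-\widetilde{\mathbf{B}}\|\le C(1+|\widetilde{\mathbf{y}}_n|^{2\nu-2})\|\mathbf{E}_n\|$. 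If $k_n=0$ (no scaling, $\|\mathbf{A}_\beta h_n\|\le 1/2$) then $\|\mathbf{E}_n\|\le c_{p,q}\|\mathbf{A}_\beta h_n\|^{p+q+1}\le C(1+|\widetilde{\mathbf{y}}_n|)^{p+q+1}h_n^{p+q+1}$; if $k_n\ge1$ then $\|\mathbf{A}_\beta h_n\|>1/2$ forces $h_n<(2K)^{p+q}(1+|\widetilde{\mathbf{y}}_n|)^{p+q}h_n^{p+q+1}$, which absorbs the missing powers of $h_n$ in $\|\mathbf{E}_n\|\le c_{p,q}2^{-(p+q)}\|\mathbf{A}_\beta h_n\|$. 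In either case $\|\mathbf{B}-\widetilde{\mathbf{B}}\|\le C(1+|\widetilde{\mathbf{y}}_n|^{s})h_n^{p+q+1}$. Consequently $|\phi_\beta-\widetilde{\phi}_\beta|\le\|\mathbf{B}-\widetilde{\mathbf{B}}\|$ gives (\ref{WSDE-LLA-9}) with $\alpha=p+q$, and the expansion $\Sigma_\beta-\widetilde{\Sigma}=\mathbf{B}_{1,2\beta}(\mathbf{B}_{1,1}^{\intercal}-\widetilde{\mathbf{B}}_{1,1}^{\intercal})+(\mathbf{B}_{1,2\beta}-\widetilde{\mathbf{B}}_{1,2\beta})\widetilde{\mathbf{B}}_{1,1}^{\intercal}$, together with $\|\mathbf{B}_{1,1}\|=\|e^{\mathbf{f}_{\mathbf{x}}h_n}\|\le e^{K}$ and $\|\mathbf{B}_{1,2\beta}\|\le C$ (bounded, since the corresponding power-series terms of $e^{\mathbf{A}_\beta h_n}$ only involve the bounded blocks $\mathbf{f}_{\mathbf{x}}$, $\mathbf{H}_i$, $\mathbf{I}_d$), gives $\|\Sigma_\beta-\widetilde{\Sigma}\|\le C(1+|\widetilde{\mathbf{y}}_n|^{s'})h_n^{p+q+1}$.

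Finally, since the $\mathbf{g}_i^{k}\in\mathcal{C}_P^{2(\beta+1)}([t_0,T],\mathbb{R})$ are bounded together with their first $\beta$ derivatives, $\|\mathbf{G}(t_n+s)-\mathbf{G}_\beta(t_n+s)\|\le Ks^\beta$ and hence $\|\mathbf{G}\mathbf{G}^{\intercal}-\mathbf{G}_\beta\mathbf{G}_\beta^{\intercal}\|\le Ks^\beta$ on $[0,h_n]$; using $\|e^{\mathbf{f}_{\mathbf{x}}(h_n-s)}\|\le e^{K}$ one gets $\|\Sigma-\Sigma_\beta\|\le Kh_n^{\beta+1}$. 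Combined with the previous paragraph this yields (\ref{WSDE-LLA-10}) with $\gamma=\min\{\beta,p+q\}$, and Theorem~\ref{WSCT} then gives the global error $C_gh^{\min\{\alpha,\beta,\gamma\}}=C_gh^{\min\{p+q,\,\beta,\,\min\{\beta,p+q\}\}}=C_gh^{\min\{\beta,p+q\}}$. I expect the hard part to be exactly the estimate of the second paragraph: because $\mathbf{A}_\beta$ is state-dependent and unbounded, the number $k_n$ of squarings is random and the usual forward-error bound for the Padé approximant carries a factor $e^{\|\mathbf{A}_\beta h_n\|}$ that is not polynomially controlled; the remedy is to exploit the precise block triangular/nilpotent structure of $\mathbf{A}_\beta$ (and of $\mathbf{E}_n$) to replace that factor by a polynomial in $1+|\widetilde{\mathbf{y}}_n|$, and then to trade powers of $h_n$ against powers of $1+|\widetilde{\mathbf{y}}_n|$ in the regime where scaling is triggered.
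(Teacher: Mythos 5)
Your overall strategy coincides with the paper's: identify $\widetilde{\phi}_\beta=\widetilde{\mathbf{B}}_{1,2\beta+2}$, $\widetilde{\Sigma}=\widetilde{\mathbf{B}}_{1,2\beta}\widetilde{\mathbf{B}}_{1,1}^{\intercal}$, split the error into the Pad\'e/scaling-and-squaring part $|\mathbf{B}-\widetilde{\mathbf{B}}|$ (order $p+q+1$) and the truncation part $|\mathbf{\Sigma}-\mathbf{\Sigma}_\beta|$ (order $\beta+1$), and feed (\ref{WSDE-LLA-9})--(\ref{WSDE-LLA-10}) with $\alpha=p+q$, $\gamma=\min\{\beta,p+q\}$ into Theorem \ref{WSCT}; your last paragraph on $|\mathbf{\Sigma}-\mathbf{\Sigma}_\beta|$ is essentially the paper's. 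Where you diverge is the treatment of the exponential factor in the Pad\'e error. The paper invokes Lemma 9 of \cite{Jimenez11}, which gives $|\mathbf{B}-\widetilde{\mathbf{B}}|\leq c_{p,q}(k_n,|\mathbf{A}_\beta|)\,|\mathbf{A}_\beta|^{p+q+1}h_n^{p+q+1}$, and then uses the hypothesis of uniformly bounded second derivatives to assert both $|\mathbf{A}_\beta(t_n,\widetilde{\mathbf{y}}_n)|<K_1$ and $|\mathbf{A}_\beta(t_n,\widetilde{\mathbf{y}}_n)|\leq K_1(1+|\widetilde{\mathbf{y}}_n|)$, so that the exponential factor is a constant and only a single factor $(1+|\widetilde{\mathbf{y}}_n|)$ survives; the local conditions then hold verbatim with linear growth and Theorem \ref{WSCT} applies as stated. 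You instead re-derive the Pad\'e error through a backward-error representation plus a Duhamel/nilpotent-structure bound on $e^{\theta\mathbf{A}_\beta h_n}$, which is an interesting idea but forces bounds of the form $C(1+|\widetilde{\mathbf{y}}_n|^{s})h_n^{p+q+1}$ with $s>1$.

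This is where your argument has a genuine gap: you then need the factor $1+|\widetilde{\mathbf{y}}_n|$ in (\ref{WSDE-LLA-9})--(\ref{WSDE-LLA-10}) to be replaceable by an arbitrary polynomial, and you assert this "since the proof of Theorem \ref{WSCT} only uses polynomial growth". That is not justified: the linear growth of the one-step defects is used in an essential way in Lemma \ref{BWLLS}, whose inequalities (\ref{WSDE-S3})--(\ref{WSDE-S4}) must have linear growth in $|\widetilde{\mathbf{y}}_n|$ for Milshtein's Lemma 9.1 to yield the uniform moment bounds of the scheme, which in turn are required both inside the proof of Theorem \ref{WSCT} and for the hypotheses of Theorem 14.5.2 in \cite{Kloeden 1995}. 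With superlinear one-step growth the moment-boundedness step does not follow from the cited results, so your final appeal to Theorem \ref{WSCT} is not available as written. A second, smaller gap is the claim that the scaling-and-squaring backward error $\mathbf{E}_n$ "inherits the same block triangular structure (bounded diagonal, growth confined to the nilpotent part)": this is exactly what your key estimate needs and it is asserted without argument. The paper avoids both issues by reading the bounded-second-derivative hypothesis as giving a uniform bound on $|\mathbf{A}_\beta|$ inside the exponential factor $c_{p,q}$, thereby staying within the linear-growth framework of Theorem \ref{WSCT}.
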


\begin{proof}
Let $\mathbf{L}_{1}$, $\mathbf{R}_{2\beta+2}$, $\mathbf{R}_{2\beta}$
and $\mathbf{R}_{1}$ be matrices such that
\[
\mathbf{B}_{1,2\beta+2}(t_{n},\widetilde{\mathbf{y}}_{n};h_{n})=\mathbf{L}%
_{1}\mathbf{B}(t_{n},\widetilde{\mathbf{y}}_{n};h_{n})\mathbf{R}_{2\beta+2},
\]%
\[
\mathbf{B}_{1,2\beta}(t_{n},\widetilde{\mathbf{y}}_{n};h_{n})=\mathbf{L}%
_{1}\mathbf{B}(t_{n},\widetilde{\mathbf{y}}_{n};h_{n})\mathbf{R}_{2\beta},
\]
and%
\[
\mathbf{B}_{1,1}(t_{n},\widetilde{\mathbf{y}}_{n};h_{n})=\mathbf{L}%
_{1}\mathbf{B}(t_{n},\widetilde{\mathbf{y}}_{n};h_{n})\mathbf{R}_{1},
\]
where the matrix
$\mathbf{B}(t_{n},\widetilde{\mathbf{y}}_{n};h_{n})$ is defined in
(\ref{matrix B WLL scheme}).

Lemma 9 in \cite{Jimenez11} implies that
\begin{align*}
\left\vert
\mathbf{B}(t_{n},\widetilde{\mathbf{y}}_{n};h_{n})-\widetilde
{\mathbf{B}}(t_{n},\widetilde{\mathbf{y}}_{n};h_{n})\right\vert  &
=\left\vert
e^{\mathbf{A}_{\mathbf{\beta}}(t_{n},\widetilde{\mathbf{y}}_{n})h_{n}%
}-(\mathbf{P}_{p,q}(2^{-k_{n}}\mathbf{A}_{\mathbf{\beta}}(t_{n},\widetilde
{\mathbf{y}}_{n})h_{n}))^{2^{k_{n}}}\right\vert \\
& \leq c_{p,q}(k_{n},\left\vert \mathbf{A}_{\mathbf{\beta}}(t_{n}%
,\widetilde{\mathbf{y}}_{n})\right\vert )\left\vert
\mathbf{A}_{\mathbf{\beta
}}(t_{n},\widetilde{\mathbf{y}}_{n})\right\vert
^{p+q+1}h_{n}^{p+q+1},
\end{align*}
where $c_{p,q}(k,\left\vert \mathbf{X}\right\vert )=\alpha2^{-k(p+q)+3}%
e^{(1+\epsilon_{p,q})\left\vert \mathbf{X}\right\vert }$ with
$\alpha
=\frac{p!q!}{(p+q)!(p+q+1)!}$ and $\epsilon_{p,q}=\alpha(\frac{1}{2})^{p+q-3}%
$. Since the drift and diffusion coefficients of (\ref{WSDE-LLA-1})
have uniformly bounded second derivatives, there exists a positive
constant $K_{1}$
such that $\left\vert \mathbf{A}_{\mathbf{\beta}}(t_{n},\widetilde{\mathbf{y}%
}_{n})\right\vert <K_{1}$ and $\left\vert \mathbf{A}_{\mathbf{\beta}}%
(t_{n},\widetilde{\mathbf{y}}_{n})\right\vert \leq
K_{1}(1+\left\vert \widetilde{\mathbf{y}}_{n}\right\vert )$ for all
$n$, which implies that $\left\vert
\mathbf{B}(t_{n},\widetilde{\mathbf{y}}_{n};h_{n})\right\vert
<\infty$ and $\left\vert \widetilde{\mathbf{B}}(t_{n},\widetilde{\mathbf{y}%
}_{n};h_{n})\right\vert <\infty$ for all $n$.

Thus, from these two bounds for $\mathbf{A}_{\mathbf{\beta}}(t_{n}%
,\widetilde{\mathbf{y}}_{n})$ it is obtained that%
\begin{align}
\left\vert \mathbf{\phi_{\beta}}\left(  t_{n},\widetilde{\mathbf{y}}_{n}%
;h_{n}\right)
-\widetilde{\mathbf{\phi}}\mathbf{_{\mathbb{\beta}}}\left(
t_{n},\widetilde{\mathbf{y}}_{n};h_{n}\right)  \right\vert  &
=\left\vert
\mathbf{L}_{1}(\mathbf{B}(t_{n},\widetilde{\mathbf{y}}_{n};h_{n}%
)-\widetilde{\mathbf{B}}(t_{n},\widetilde{\mathbf{y}}_{n};h_{n}))\mathbf{R}%
_{2\beta+2}\right\vert \nonumber\\
& \leq K_{2}(1+\left\vert \widetilde{\mathbf{y}}_{n}\right\vert )h_{n}%
^{p+q+1},\label{WSDE-S1}%
\end{align}
where $K_{2}=c_{p,q}(0,K_{1})\left\vert \mathbf{L}_{1}\right\vert
\left\vert \mathbf{R}_{2\beta+2}\right\vert K_{1}^{p+q+1}$.
Similarly, it is obtained
that%
\begin{align*}
&  \left\vert \mathbf{\Sigma}_{\beta}(t_{n},\widetilde{\mathbf{y}}_{n}%
;h_{n})-\widetilde{\mathbf{\Sigma}}_{\beta}(t_{n},\widetilde{\mathbf{y}}%
_{n};h_{n})\right\vert \\
&  =\left\vert \mathbf{B}_{1,2\beta}(t_{n},\widetilde{\mathbf{y}}_{n}%
;h_{n})\mathbf{B}_{1,1}^{\intercal}(t_{n},\widetilde{\mathbf{y}}_{n}%
;h_{n})-\widetilde{\mathbf{B}}_{1,2\beta}(t_{n},\widetilde{\mathbf{y}}%
_{n};h_{n})\widetilde{\mathbf{B}}_{1,1}^{\intercal}(t_{n},\widetilde
{\mathbf{y}}_{n};h_{n})\right\vert \\
&  \leq\left\vert \mathbf{L}_{1}(\mathbf{B}(t_{n},\widetilde{\mathbf{y}}%
_{n};h_{n})-\widetilde{\mathbf{B}}(t_{n},\widetilde{\mathbf{y}}_{n}%
;h_{n}))\mathbf{R}_{2\beta}\mathbf{R}_{1}^{\intercal}\mathbf{B}^{\intercal
}(t_{n},\widetilde{\mathbf{y}}_{n};h_{n})\mathbf{L}_{1}^{\intercal}\right\vert
\\
&  +\left\vert \mathbf{L}_{1}\widetilde{\mathbf{B}}(t_{n},\widetilde
{\mathbf{y}}_{n};h_{n})\mathbf{R}_{2\beta}\mathbf{R}_{1}^{\intercal
}(\mathbf{B}(t_{n},\widetilde{\mathbf{y}}_{n};h_{n})-\widetilde{\mathbf{B}%
}(t_{n},\widetilde{\mathbf{y}}_{n};h_{n}))^{\intercal}\mathbf{L}%
_{1}^{\intercal}\right\vert \\
&  \leq\left\vert \mathbf{L}_{1}\right\vert ^{2}\left\vert
\mathbf{R}_{2\beta
}\right\vert \left\vert \mathbf{R}_{1}\right\vert \left\vert \mathbf{B}%
(t_{n},\widetilde{\mathbf{y}}_{n};h_{n})-\widetilde{\mathbf{B}}(t_{n}%
,\widetilde{\mathbf{y}}_{n};h_{n})\right\vert \\
&  \cdot(\left\vert \mathbf{B}(t_{n},\widetilde{\mathbf{y}}_{n};h_{n}%
)\right\vert +\left\vert \widetilde{\mathbf{B}}(t_{n},\widetilde{\mathbf{y}%
}_{n};h_{n})\right\vert )\\
&  \leq K_{3}(1+\left\vert \widetilde{\mathbf{y}}_{n}\right\vert
)h_{n}^{p+q+1},
\end{align*}
where $K_{3}$ is a positive constant.

On the other hand, by using the expressions for $\mathbf{\Sigma}$
and $\mathbf{\Sigma}_{\beta}$ and taking into account that the drift
and diffusion coefficients of (\ref{WSDE-LLA-1}) have uniformly
bounded second derivatives
follows that%
\begin{align*}
&  \left\vert \mathbf{\Sigma}(t_{n},\widetilde{\mathbf{y}}_{n};h_{n}%
)-\mathbf{\Sigma}_{\beta}(t_{n},\widetilde{\mathbf{y}}_{n};h_{n})\right\vert
\\
&  \leq Ch_{n}\underset{s\in\lbrack0,h_{n}]}{\sup}\left\vert \mathbf{G}%
(t_{n}+s)\mathbf{G}^{\intercal}(t_{n}+s)-\mathbf{G}_{\beta}(t_{n}%
+s)\mathbf{G}_{\beta}^{\intercal}(t_{n}+s)\right\vert \\
&  \leq Ch_{n}\underset{s\in\lbrack0,h_{n}]}{\sup}(\left\vert \mathbf{G}%
(t_{n}+s)\right\vert +\left\vert
\mathbf{G}_{\beta}(t_{n}+s)\right\vert
)\left\vert \mathbf{G}(t_{n}+s)-\mathbf{G}_{\beta}(t_{n}+s)\right\vert \\
&  \leq C(1+\left\vert \widetilde{\mathbf{y}}_{n}\right\vert
)h_{n}^{\beta+1},
\end{align*}
where $C$ is a positive constant.

From the last two inequalities follows that
\begin{align}
\left\vert \mathbf{\Sigma}(t_{n},\widetilde{\mathbf{y}}_{n};h_{n}%
)-\widetilde{\mathbf{\Sigma}}(t_{n},\widetilde{\mathbf{y}}_{n};h_{n}%
)\right\vert  &  \leq\left\vert \mathbf{\Sigma}(t_{n},\widetilde{\mathbf{y}%
}_{n};h_{n})-\mathbf{\Sigma}_{\beta}(t_{n},\widetilde{\mathbf{y}}_{n}%
;h_{n})\right\vert \nonumber\\
&  +\left\vert \mathbf{\Sigma}_{\beta}(t_{n},\widetilde{\mathbf{y}}_{n}%
;h_{n})-\widetilde{\mathbf{\Sigma}}(t_{n},\widetilde{\mathbf{y}}_{n}%
;h_{n})\right\vert \nonumber\\
&  \leq(C+K_{3})(1+\left\vert \widetilde{\mathbf{y}}_{n}\right\vert
)h_{n}^{\min\{\beta+1,p+q+1\}},\label{WSDE-S2}%
\end{align}

The proof concludes by using Theorem \ref{WSCT} with inequalities
(\ref{WSDE-S1})-(\ref{WSDE-S2}).
\end{proof}

Similarly for SDEs with constant diffusion coefficients, it can be
proved that
the error of the weak LL scheme%
\begin{equation}
\widetilde{\mathbf{y}}_{n+1}=\widetilde{\mathbf{y}}_{n}+\widetilde{\mathbf{D}%
}_{14}(t_{n},\widetilde{\mathbf{y}}_{n};h_{n})+\mathbf{(}\widetilde
{\mathbf{D}}_{12}\mathbf{(}t_{n},\widetilde{\mathbf{y}}_{n};h_{n}%
\mathbf{)}\widetilde{\mathbf{D}}_{11}^{\intercal}\mathbf{(}t_{n}%
,\widetilde{\mathbf{y}}_{n};h_{n}\mathbf{))}^{1/2}\xi_{n+1},\label{WLLS Pade}%
\end{equation}
obtained from (\ref{Scheme1}) with $\widetilde{\mathbf{D}}(t_{n}%
,\widetilde{\mathbf{y}}_{n};h_{n})=(\mathbf{P}_{p,q}(2^{-k_{n}}\mathbf{C}%
_{\beta}(t_{n},\widetilde{\mathbf{y}}_{n})h_{n}))^{2^{k_{n}}}$, is
given by
\[
\left\vert E\left(  g(\mathbf{x}(T))\right)  -E\left(  g(\widetilde
{\mathbf{y}}_{n_{T}})\right)  \right\vert \leq C_{g}h^{\min\{\beta,p+q\}}%
\]
for all
$g\in\mathcal{C}_{P}^{2(\beta+1)}(\mathbb{R}^{d},\mathbb{R})$, where
$C_{g}$ is a positive constant.

We recall from \cite{de la Cruz HOLL} that LL schemes (\ref{WLLS
Pade NoAuto}) and (\ref{WLLS Pade}) are $A$-stable, therefore they
preserve the ergodicity of the linear SDEs. They also are
geometrically ergodic for some class of nonlinear SDEs
\cite{Hansen03}. However, due to the use of Pad\'{e} approximations
these schemes are not appropriate for large dimensional systems of
SDEs. For that type of equations, LL schemes based on Krylov method
for matrix exponential \cite{Moler 2003} are recommended.

In such a case, the LL discretization (\ref{Scheme1}) can be
rewritten as
\[
\mathbf{y}_{n+1}=\mathbf{y}_{n}+\mathbf{P}^{\intercal}(t_{n},\mathbf{y}%
_{n};h_{n})\mathbf{R}_{4}+\mathbf{(P^{\intercal}}(t_{n},\mathbf{y}_{n}%
;h_{n})\mathbf{R}_{2}\mathbf{R}_{1}^{\intercal}\mathbf{P(}t_{n},\mathbf{y}%
_{n};h_{n}\mathbf{))}^{1/2}\xi_{n+1}%
\]
where
$\mathbf{P(}t_{n},\mathbf{y}_{n};h_{n}\mathbf{)}=e^{\mathbf{C}_{\beta
}^{\intercal}(t_{n},\mathbf{y}_{n})h_{n}}\mathbf{L}_{1}^{\intercal}$
and $\mathbf{L}_{1},\mathbf{R}_{1},\mathbf{R}_{2},\mathbf{R}_{4}$
are matrices such that
\[
\mathbf{D}_{1,4}(t_{n},\mathbf{y}_{n};h_{n})=\mathbf{L}_{1}e^{\mathbf{C}%
_{\beta}(t_{n},\mathbf{y}_{n})h_{n}}\mathbf{R}_{4},
\]%
\[
\mathbf{D}_{1,2}(t_{n},\mathbf{y}_{n};h_{n})=\mathbf{L}_{1}e^{\mathbf{C}%
_{\beta}(t_{n},\mathbf{y}_{n})h_{n}}\mathbf{R}_{2},
\]
and%
\[
\mathbf{D}_{1,1}(t_{n},\mathbf{y}_{n};h_{n})=\mathbf{L}_{1}e^{\mathbf{C}%
_{\beta}(t_{n},\mathbf{y}_{n})h_{n}}\mathbf{R}_{1}.
\]
If the Krylov-Pad\'{e} method is used to compute
\[
vec(\mathbf{P(}t_{n},\widetilde{\mathbf{y}}_{n};h_{n}))=e^{\mathbf{I}%
\otimes\mathbf{C}_{\beta}^{\intercal}(t_{n},\widetilde{\mathbf{y}}_{n})h_{n}%
}vec(\mathbf{L}_{1}^{\intercal}),
\]
we have the following LL scheme%
\begin{equation}
\widetilde{\mathbf{y}}_{n+1}=\widetilde{\mathbf{y}}_{n}+\widetilde{\mathbf{P}%
}^{\intercal}(t_{n},\widetilde{\mathbf{y}}_{n};h_{n})\mathbf{R}_{4}%
+\mathbf{(\widetilde{\mathbf{P}}^{\intercal}}(t_{n},\widetilde{\mathbf{y}}%
_{n};h_{n})\mathbf{R}_{2}\mathbf{R}_{1}^{\intercal}\widetilde{\mathbf{P}%
}\mathbf{(}t_{n},\widetilde{\mathbf{y}}_{n};h_{n}\mathbf{))}^{1/2}\xi
_{n+1},\label{WLLS Krylov}%
\end{equation}
where
\[
vec(\widetilde{\mathbf{P}}\mathbf{(}t_{n},\widetilde{\mathbf{y}}_{n}%
;h_{n}\mathbf{)})=\mathbf{k}_{m_{n},k_{n}}^{p,q}(h_{n},\mathbf{I}%
\otimes\mathbf{C}_{\beta}^{\intercal}(t_{n},\widetilde{\mathbf{y}}%
_{n}),vec(\mathbf{L}_{1}^{\intercal})),
\]
$\mathbf{k}_{m_{n},k_{n}}^{p,q}$ denotes the $(m_{n},p,q,k_{n})-$%
Krylov-Pad\'{e} approximation defined as in \cite{Jimenez11}, and
$\mathbf{I}$ is the identity matrix of dimension $2d+2$.

At glance, this numerical scheme seems to be computationally
inefficient since it involves the computation of large matrix
exponentials. Indeed,
$\mathbf{I}\otimes\mathbf{C}_{\beta}^{\intercal}(t_{n},\widetilde{\mathbf{y}%
}_{n})$ is a $(2d+2)^{2}\times(2d+2)^{2}$ matrix. However, this
matrix is
block diagonal with diagonal entries $\mathbf{C}_{\beta}^{\intercal}%
(t_{n},\widetilde{\mathbf{y}}_{n})$. This block structure allows us
to save computer storage capacity with an adequate algorithmic
implementation. In addition, it implies that the number $m_{n}$ of
Krylov subspaces necessary to compute
$e^{(\frac{1}{h_{n}}\mathbf{I}\otimes\mathbf{C}_{\beta}^{\intercal
}(t_{n},\widetilde{\mathbf{y}}_{n}))h_{n}}vec(\mathbf{L}_{1}^{\intercal})$
has
the same order of magnitude than that needed for computing $e^{\mathbf{C}%
_{\beta}^{\intercal}(t_{n},\widetilde{\mathbf{y}}_{n})h_{n}}\mathbf{1}$.
Typically, $m_{n}<<d$ in practical situations. This makes the LL
scheme (\ref{WLLS Krylov}) feasible and computationally efficient.

\begin{theorem}
Let $\mathbf{x}$ be the solution of a SDE with constant diffusion
coefficients and drift coefficient of class
$\mathcal{C}_{P}^{2(\beta+1)}$ with uniformly bounded second
derivatives. If $m_{n}\geq2h_{n}\left\vert \mathbf{C}_{\beta
}(t_{n},\widetilde{\mathbf{y}}_{n})\right\vert _{2}$ for all $n$,
then the error of the weak LL scheme (\ref{WLLS Krylov}) is given by
\[
\left\vert E\left(  g(\mathbf{x}(T))\right)  -E(g(\widetilde{\mathbf{y}%
}_{n_{T}}))\right\vert _{2}\leq C_{g}h^{\min\{\beta,m-1,p+q\}},
\]
for all
$g\in\mathcal{C}_{P}^{2(\beta+1)}(\mathbb{R}^{d},\mathbb{R})$, where
$m=\min\{m_{n}\}$, $C_{g}$ is a positive constant, and $\left\vert
\cdot\right\vert _{2}$ denotes the Euclidean norm.
\end{theorem}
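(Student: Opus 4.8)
The plan is to reproduce the argument of Theorem~\ref{Theorem Conv LL Pade SDE} almost verbatim, replacing the pure Pad\'{e} error estimate by a Krylov--Pad\'{e} one, and then to invoke Theorem~\ref{WSCT}. The target is to establish the two local hypotheses (\ref{WSDE-LLA-9})--(\ref{WSDE-LLA-10}) of that theorem with exponents $\alpha=\min\{m-1,p+q\}$ and $\gamma=\min\{m-1,p+q\}$, so that $\min\{\alpha,\beta,\gamma\}=\min\{\beta,m-1,p+q\}$, which is precisely the asserted rate; here $\beta$ is the order of the underlying weak LL discretization, and the assumptions of Theorem~\ref{TheoremConvAppLL} inherited by Theorem~\ref{WSCT} follow from $\mathbf{f}\in\mathcal{C}_{P}^{2(\beta+1)}$ with uniformly bounded second derivatives together with the constancy (hence boundedness) of $\mathbf{G}$.

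First I would introduce the fixed matrices $\mathbf{L}_{1},\mathbf{R}_{1},\mathbf{R}_{2},\mathbf{R}_{4}$ that extract the relevant blocks, so that $\mathbf{D}_{14},\mathbf{D}_{12},\mathbf{D}_{11}$ and their tilded counterparts are written as $\mathbf{L}_{1}\mathbf{P}^{\intercal}\mathbf{R}_{j}$ and $\mathbf{L}_{1}\widetilde{\mathbf{P}}^{\intercal}\mathbf{R}_{j}$, exactly as in the display preceding the theorem. The one new analytic ingredient is the error bound from \cite{Jimenez11} for the $(m_{n},p,q,k_{n})$ Krylov--Pad\'{e} approximation of $e^{\mathbf{I}\otimes\mathbf{C}_{\beta}^{\intercal}h_{n}}vec(\mathbf{L}_{1}^{\intercal})$: the hypothesis $m_{n}\geq 2h_{n}|\mathbf{C}_{\beta}(t_{n},\widetilde{\mathbf{y}}_{n})|_{2}$ is exactly what forces the Krylov truncation error to be $O(h_{n}^{m_{n}})$, so that the total error is $O(h_{n}^{\min\{m_{n}-1,p+q\}+1})$. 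As in the Pad\'{e} proof, the uniformly bounded second derivatives of $\mathbf{f}$ give $|\mathbf{C}_{\beta}(t_{n},\widetilde{\mathbf{y}}_{n})|_{2}<K_{1}$ and $|\mathbf{C}_{\beta}(t_{n},\widetilde{\mathbf{y}}_{n})|\leq K_{1}(1+|\widetilde{\mathbf{y}}_{n}|)$ uniformly in $n$; this keeps the error constant uniformly bounded, makes $|\mathbf{P}|$ and $|\widetilde{\mathbf{P}}|$ finite, and (using $h_{n}<1$ and $m_{n}\geq m$) yields
$$
\left|\mathbf{P}(t_{n},\widetilde{\mathbf{y}}_{n};h_{n})-\widetilde{\mathbf{P}}(t_{n},\widetilde{\mathbf{y}}_{n};h_{n})\right|\leq c\,(1+|\widetilde{\mathbf{y}}_{n}|)\,h_{n}^{\min\{m-1,p+q\}+1}.
$$

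Multiplying on the left and right by $\mathbf{L}_{1}$ and $\mathbf{R}_{4}$ gives (\ref{WSDE-LLA-9}) with $\alpha=\min\{m-1,p+q\}$. For the covariance I would note that here $\mathbf{G}$ is constant, so no Taylor truncation of $\mathbf{G}$ is needed and $\mathbf{\Sigma}_{\beta}=\mathbf{\Sigma}$; writing $\mathbf{\Sigma}=\mathbf{D}_{12}\mathbf{D}_{11}^{\intercal}$ and $\widetilde{\mathbf{\Sigma}}=\widetilde{\mathbf{D}}_{12}\widetilde{\mathbf{D}}_{11}^{\intercal}$, the elementary bound $|AB^{\intercal}-\widetilde{A}\widetilde{B}^{\intercal}|\leq|A-\widetilde{A}||B|+|\widetilde{A}||B-\widetilde{B}|$ combined with the uniform bounds on $|\mathbf{P}|,|\widetilde{\mathbf{P}}|$ gives (\ref{WSDE-LLA-10}) with $\gamma=\min\{m-1,p+q\}$. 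Feeding $\alpha,\beta,\gamma$ into Theorem~\ref{WSCT} then produces the global weak error $C_{g}h^{\min\{\alpha,\beta,\gamma\}}=C_{g}h^{\min\{\beta,m-1,p+q\}}$; all norms may be taken to be $|\cdot|_{2}$ by equivalence of norms on finite-dimensional spaces. The step I expect to be the real obstacle is the first one: extracting from \cite{Jimenez11} the precise form of the Krylov--Pad\'{e} error constant and confirming that $m_{n}\geq 2h_{n}|\mathbf{C}_{\beta}|_{2}$ is exactly the condition that makes it uniformly bounded in $n$ and polynomially bounded in $|\widetilde{\mathbf{y}}_{n}|$; this also uses the block-diagonal structure of $\mathbf{I}\otimes\mathbf{C}_{\beta}^{\intercal}$, which keeps the effective Krylov dimension of order $m_{n}$ rather than scaling with $d$.
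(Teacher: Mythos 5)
Your proposal is correct and follows essentially the same route as the paper's own proof: block-extraction via $\mathbf{L}_{1},\mathbf{R}_{j}$, the Krylov--Pad\'{e} error bound from \cite{Jimenez11} (Lemma 11 there, with the hypothesis $m_{n}\geq2h_{n}\left\vert \mathbf{C}_{\beta}\right\vert _{2}$ controlling the Krylov term), uniform bounds on $\mathbf{C}_{\beta}$ from the bounded second derivatives and constant diffusion, a product-difference bound for the covariance, and finally Theorem \ref{WSCT} with $\alpha=\gamma=\min\{m-1,p+q\}$. Your observation that $\mathbf{\Sigma}_{\beta}=\mathbf{\Sigma}$ for constant $\mathbf{G}$ is consistent with the paper's treatment, so no substantive difference remains.
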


\begin{proof}
Taking into account that $\left\vert
\mathbf{I}\otimes\mathbf{C}_{\beta
}^{\intercal}(t_{n},\widetilde{\mathbf{y}}_{n})\right\vert
_{2}=\left\vert
\mathbf{C}_{\beta}^{\intercal}(t_{n},\widetilde{\mathbf{y}}_{n})\right\vert
_{2}$, Lemma 11 in \cite{Jimenez11} implies that
\begin{align*}
&  \left\vert
\mathbf{P}(t_{n},\widetilde{\mathbf{y}}_{n};h_{n})-\widetilde
{\mathbf{P}}(t_{n},\widetilde{\mathbf{y}}_{n};h_{n})\right\vert _{2}\\
&  =\left\vert e^{\mathbf{C}_{\beta}^{\intercal}(t_{n},\widetilde{\mathbf{y}%
}_{n})h_{n}}\mathbf{L}_{1}^{\intercal}-\mathbf{k}_{m_{n},k_{n}}^{p,q}%
(h_{n},\mathbf{I}\otimes\mathbf{C}_{\beta}^{\intercal}(t_{n},\widetilde
{\mathbf{y}}_{n}),vec(\mathbf{L}_{1}^{\intercal}))\right\vert _{2}\\
&  \leq C_{m_{n},k_{n}}^{p,q}(1,\left\vert
\mathbf{C}_{\beta}^{\intercal
}(t_{n},\widetilde{\mathbf{y}}_{n})\right\vert _{2})\left\vert h_{n}%
\mathbf{C}_{\beta}^{\intercal}(t_{n},\widetilde{\mathbf{y}}_{n})\right\vert
_{2}^{\min\{m_{n},p+q+1\}},
\end{align*}
where $C_{m,\kappa}^{p,q}(\beta,\rho)=\beta$ $c_{p,q}(\kappa,\rho)$
$+12\beta e^{m-\rho}(\frac{1}{m})^{m}$ with $c_{p,q}(k,\left\vert
\mathbf{X}\right\vert
)=\alpha2^{-k(p+q)+3}e^{(1+\epsilon_{p,q})\left\vert
\mathbf{X}\right\vert }$ and $\alpha=\frac{p!q!}{(p+q)!(p+q+1)!}$.
Since the diffusion coefficients are constants and the drift
coefficient has uniformly bounded second derivatives, there exists a
positive constant $M$ such that $\left\vert \mathbf{C}_{\beta
}^{\intercal}(t_{n},\widetilde{\mathbf{y}}_{n})\right\vert _{2}<M$
and
$\left\vert \mathbf{C}_{\beta}^{\intercal}(t_{n},\widetilde{\mathbf{y}}%
_{n})\right\vert _{2}\leq M(1+\left\vert
\widetilde{\mathbf{y}}_{n}\right\vert )$ for all $n$, which implies
that $\left\vert \mathbf{P}(t_{n},\widetilde
{\mathbf{y}}_{n};h_{n})\right\vert _{2}<\infty$ and $\left\vert
\widetilde
{\mathbf{P}}(t_{n},\widetilde{\mathbf{y}}_{n};h_{n})\right\vert
_{2}<\infty$ for all $n$.

Thus, from these two bounds for $\mathbf{C}_{\beta}^{\intercal}(t_{n}%
,\widetilde{\mathbf{y}}_{n})$ it is obtained that%
\begin{align*}
\left\vert \mathbf{\phi_{\beta}}\left(  t_{n},\widetilde{\mathbf{y}}_{n}%
;h_{n}\right)
-\widetilde{\mathbf{\phi}}\mathbf{_{\mathbb{\beta}}}\left(
t_{n},\widetilde{\mathbf{y}}_{n};h_{n}\right)  \right\vert _{2}  &
=\left\vert
\mathbf{P}^{\intercal}(t_{n},\widetilde{\mathbf{y}}_{n};h_{n})\mathbf{R}%
_{4}-\widetilde{\mathbf{P}}^{\intercal}(t_{n},\widetilde{\mathbf{y}}_{n}%
;h_{n})\mathbf{R}_{4}\right\vert _{2}\\
& \leq M_{1}(1+\left\vert \widetilde{\mathbf{y}}_{n}\right\vert _{2}%
^{2})^{1/2}h_{n}^{\min\{m,p+q+1\}},
\end{align*}
where $m=\min\{m_{n}\}$ and $M_{1}=C_{m,0}^{p,q}(1,M)\left\vert \mathbf{R}%
_{4}\right\vert M^{\min\{m-1,p+q\}}$. Similarly, it is obtained that%
\begin{align*}
&  \left\vert \mathbf{\Sigma}_{\beta}(t_{n},\widetilde{\mathbf{y}}_{n}%
;h_{n})-\widetilde{\mathbf{\Sigma}}(t_{n},\widetilde{\mathbf{y}}_{n}%
;h_{n})\right\vert _{2}\\
&  =\left\vert \mathbf{\mathbf{P}^{\intercal}}(t_{n},\widetilde{\mathbf{y}%
}_{n};h_{n})\mathbf{R}_{2}\mathbf{R}_{1}^{\intercal}\mathbf{P(}t_{n}%
,\widetilde{\mathbf{y}}_{n};h_{n}\mathbf{)}-\mathbf{\widetilde{\mathbf{P}%
}^{\intercal}}(t_{n},\widetilde{\mathbf{y}}_{n};h_{n})\mathbf{R}_{2}%
\mathbf{R}_{1}^{\intercal}\widetilde{\mathbf{P}}\mathbf{(}t_{n},\widetilde
{\mathbf{y}}_{n};h_{n}\mathbf{)}\right\vert _{2}\\
&  \leq\left\vert (\mathbf{\mathbf{P}^{\intercal}}(t_{n},\widetilde
{\mathbf{y}}_{n};h_{n})-\widetilde{\mathbf{\mathbf{P}}}\mathbf{^{\intercal}%
}(t_{n},\widetilde{\mathbf{y}}_{n};h_{n}))\mathbf{R}_{2}\mathbf{R}%
_{1}^{\intercal}\mathbf{P(}t_{n},\widetilde{\mathbf{y}}_{n};h_{n}%
\mathbf{)}\right\vert _{2}\\
&  +\left\vert \widetilde{\mathbf{\mathbf{P}}}\mathbf{^{\intercal}}%
(t_{n},\widetilde{\mathbf{y}}_{n};h_{n}))\mathbf{R}_{2}\mathbf{R}%
_{1}^{\intercal}(\mathbf{\mathbf{P}}(t_{n},\widetilde{\mathbf{y}}_{n}%
;h_{n})-\widetilde{\mathbf{\mathbf{P}}}(t_{n},\widetilde{\mathbf{y}}_{n}%
;h_{n}))\right\vert _{2}\\
&  \leq\left\vert \mathbf{R}_{2}\right\vert _{2}\left\vert \mathbf{R}%
_{1}^{\intercal}\right\vert _{2}\left\vert \mathbf{\mathbf{P}}(t_{n}%
,\widetilde{\mathbf{y}}_{n};h_{n})-\widetilde{\mathbf{\mathbf{P}}}%
(t_{n},\widetilde{\mathbf{y}}_{n};h_{n})\right\vert _{2}\\
&  \cdot(\left\vert \mathbf{P}(t_{n},\widetilde{\mathbf{y}}_{n};h_{n}%
)\right\vert _{2}+\left\vert \widetilde{\mathbf{P}}(t_{n},\widetilde
{\mathbf{y}}_{n};h_{n})\right\vert _{2})\\
&  \leq M_{2}(1+\left\vert \widetilde{\mathbf{y}}_{n}\right\vert _{2}%
^{2})^{1/2}h_{n}^{\min\{m,p+q+1\}},
\end{align*}
where $M_{2}$ is a positive constant.

The proof concludes by using Theorem \ref{WSCT} with last two
inequalities.
\end{proof}

In the theorem above the restriction to the norm $\left\vert
\cdot\right\vert _{2}$ results from the condition
$m_{n}\geq2h_{n}\left\vert \mathbf{C}_{\beta
}(t_{n},\widetilde{\mathbf{y}}_{n})\right\vert _{2}$, which is
required to establish the convergence of the Krylov-Pad\'{e}
approximation to the exponential matrices. Nevertheless, depending
on the class of the matrix
$\mathbf{C}_{\beta}(t_{n},\widetilde{\mathbf{y}}_{n})$ and/or the
location and shape of its spectrum (see, e.g., \cite{Sidje 1998} and
references therein), such restriction might be discarded.

Analogously, for nonautonomous SDEs, the discretization
(\ref{Scheme2}) can be
rewritten in terms of $\mathbf{P(}t_{n},\mathbf{y}_{n};h_{n}\mathbf{)}%
=e^{\mathbf{A}_{\beta}^{\intercal}(t_{n},\mathbf{y}_{n})h_{n}}\mathbf{L}%
_{1}^{\intercal}$ to obtain a LL scheme similar to (\ref{WLLS
Krylov}) in
terms of $vec(\widetilde{\mathbf{P}}\mathbf{(}t_{n},\widetilde{\mathbf{y}}%
_{n};h_{n}\mathbf{)})=\mathbf{k}_{m_{n},k_{n}}^{p,q}(h_{n},\mathbf{I}%
\otimes\mathbf{A}_{\beta}^{\intercal}(t_{n},\widetilde{\mathbf{y}}%
_{n}),vec(\mathbf{L}_{1}^{\intercal}))$. The convergence of such a
scheme can be then proved as in the previous theorem.

In addition, it is worth noting that, since the LL discretization
(\ref{LLDiscretization}) provides weak solutions for autonomous
linear SDEs with additive noise at all $t_{n}\in(t)_{h}$, the LL
schemes (\ref{WLLS Pade}) and (\ref{WLLS Krylov}) converge to weak
solutions of these equations with order $p+q$ and $\min\{m,p+q\}$,
respectively.

\section{Extension to SDEs with jumps}

Consider a $d$-dimensional jump diffusion process $\mathbf{z}$ defined by the SDE%

\begin{align}
d\mathbf{z}(t)  & =\mathbf{f}(t,\mathbf{z}(t))dt+\sum\limits_{i=1}%
^{m}\mathbf{g}_{i}(t)d\mathbf{w}^{i}(t)+\sum\limits_{i=1}^{p}\mathbf{h}%
_{i}\mathbf{(}t,\mathbf{z}(t))d\mathbf{q}^{i}(t)\label{W Jump SDE1}\\
\mathbf{z}(t_{0})  & =\mathbf{x}_{0},\label{W Jump SDE1b}%
\end{align}
where $\mathbf{f}$, $\mathbf{g}_{i},\mathbf{w}$ are defined as in
(\ref{WSDE-LLA-1}),
$\mathbf{h}_{i}:[t_{0},T]\times\mathbb{R}^{d}\rightarrow$
$\mathbb{R}^{d}$ is a function, and $\mathbf{q}^{i}(t)$ is a $\mathcal{F}_{t}%
$-adapted Poisson counting process $\mathbf{n}^{i}(t)$ with
intensity
$\mathbf{\mu}^{i}$. It is assumed that $\mathbf{w}^{i}(t)$ and $\mathbf{q}%
^{j}(t)$ are all independent with zero probability of simultaneous
jumps for all $t$.

Further, let us consider the sequence of jump times
$\{\mathbb{\sigma}\}_{\mathbb{\mu}^{i}
}=\{\mathbb{\sigma}_{i,n}:n=0,1,2,\ldots\}$ associated to
$\mathbf{q}^{i}(t)$, which is defined as an increasing sequence of
random variables such that
$\mathbb{\sigma}_{i,n+1}-\mathbb{\sigma}_{i,n}$ is exponentially
distributed with parameter $\mu^{i}$, for all $n$ and $i$. It is
assumed that $\{\mathbb{\sigma\}}_{\mathbb{\mu}^{i}}\subset(t)_{h}$
for all $i=1,\ldots,p$, where $(t)_{h}$ is a time discretization
defined as before.
\newline

\begin{definition}
(\cite{Carbonell08}) For a given time discretization $(t)_{h}$, the
order-$\mathbb{\beta}$ weak Local Linear discretization of the jump
diffusion process $\mathbf{z}$ is defined by the recursive relation
\begin{equation}
\mathbf{z}_{n}=\mathbf{z}_{n-}+\sum\limits_{i=1}^{p}\mathbf{h}_{i}%
(t_{n},\mathbf{z}_{n-})\Delta\mathbf{n}_{n}^{i},\label{IL.C.15}%
\end{equation}
where $\mathbf{z}_{n-}$ denotes the value $\mathbf{y}_{n}$ of an
order-$\mathbb{\beta}$ weak Local Linear discretization of diffusion
process defined by (\ref{WSDE-LLA-1}) on $[t_{n-1},t_{n}]$ with
initial condition $\mathbf{x}(t_{n-1})=\mathbf{z}_{n-1}$, and
$\Delta\mathbf{n}_{n}^{i}$ is the increment of the process
$\mathbf{n}_{n}^{i}$ at the time instant $t_{n}$.
\end{definition}

According to the results of the subsection above, it is easy to
realize that numerical implementations of weak Local Linear
discretization for SDE with jumps involve the use of weak LL schemes
for equations with no jumps. Indeed, an weak Local Linearization
scheme $\widetilde{\mathbf{z}}_{n}$ for the integration of the SDE
with jumps (\ref{W Jump SDE1})-(\ref{W Jump SDE1b}) can be defined
as
\begin{equation}
\widetilde{\mathbf{z}}_{n}=\widetilde{\mathbf{z}}_{n-}+\sum\limits_{i=1}%
^{p}\mathbf{h}_{i}(t_{n},\widetilde{\mathbf{z}}_{n-})\Delta\mathbf{n}_{n}%
^{i},\label{WLL scheme for SDEs with jumps}%
\end{equation}
where $\widetilde{\mathbf{z}}_{n-}$ denotes the value $\widetilde{\mathbf{y}%
}_{n}$ an order-$\mathbb{\beta}$ \ weak LL scheme for the SDE
(\ref{WSDE-LLA-1}) on $[t_{n-1},t_{n}]$ with initial condition $\mathbf{z}%
(t_{n-1})=\widetilde{\mathbf{y}}_{n-1}$, and
$\Delta\mathbf{n}_{n}^{i}$ is the increment of the process
$\mathbf{n}_{n}^{i}$ at the time instant $t_{n}$.

In order to study the convergence of the LL scheme (\ref{WLL scheme
for SDEs with jumps}) the following result is useful, which is a
straightforward consequence of Theorem \ref{WSCT}.

\begin{corollary}
\label{Corollary Conv WLL jump}Under conditions of Theorem
\ref{WSCT}, all LL scheme for the SDE
(\ref{WSDE-LLA-1})-(\ref{WSDE-LLA-2}) defined as in (\ref{GWLLS})
satisfies the hypothesis of Theorem 14.5.2 in \cite{Kloeden 1995}.
\end{corollary}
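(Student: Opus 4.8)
This statement is essentially a matter of record-keeping: the proof of Theorem \ref{WSCT} already proceeds by verifying, for a numerical implementation of the weak Local Linear discretization, precisely the hypotheses that Theorem 14.5.2 in \cite{Kloeden 1995} demands of a one-step weak scheme, and that verification applies verbatim to any scheme of the form (\ref{GWLLS}). The plan is therefore to isolate the three ingredients of that hypothesis and indicate where each has been obtained: (a) that (\ref{GWLLS}) is a genuine one-step scheme, i.e. $\widetilde{\mathbf{y}}_{n+1}-\widetilde{\mathbf{y}}_{n}$ is a measurable function of $(t_{n},\widetilde{\mathbf{y}}_{n};h_{n})$ and of a noise variable independent of $\mathcal{F}_{t_{n}}$, which is immediate from the form of the recursion; (b) a uniform bound, in every moment, on the iterates $\widetilde{\mathbf{y}}_{n}$; and (c) the local moment conditions comparing $\widetilde{\mathbf{y}}_{n+1}-\widetilde{\mathbf{y}}_{n}$ with the increment of the exact diffusion.

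For (b) I would simply invoke Lemma \ref{BWLLS}: its hypotheses---conditions (\ref{W GrowthBoundLemma})--(\ref{W BoundLemma}), the consistency bounds (\ref{WSDE-LLA-9})--(\ref{WSDE-LLA-10}), and finiteness of all moments of $\widetilde{\mathbf{y}}_{0}$---are all among the standing assumptions of Theorem \ref{WSCT}, and its conclusion $E(\max_{0\le n\le n_{T}}|\widetilde{\mathbf{y}}_{n}|^{2j}\mid\mathcal{F}_{t_{0}})\le M(1+|\widetilde{\mathbf{y}}_{0}|^{2r})$, valid for every $j$, is exactly the moment bound required. Condition (c) splits into an absolute-moment bound and a moment-matching bound. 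For the first, inequality (\ref{WSDE-LLA-6}) of the proof of Theorem \ref{WSCT} gives $E(|\widetilde{\mathbf{y}}_{n+1}-\widetilde{\mathbf{y}}_{n}|^{2j}\mid\mathcal{F}_{t_{n}})\le K(1+|\widetilde{\mathbf{y}}_{n}|^{2j})h_{n}^{j}$, and combining this with Cauchy--Schwarz produces, for every integer $l$, a bound $E(|\widetilde{\mathbf{y}}_{n+1}-\widetilde{\mathbf{y}}_{n}|^{l}\mid\mathcal{F}_{t_{n}})\le K(1+|\widetilde{\mathbf{y}}_{n}|^{2q})h_{n}^{l/2}$, which in particular yields the absolute-moment condition of Theorem 14.5.2. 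For the second, the closing estimate of the proof of Theorem \ref{WSCT},
\[
\left\vert E\left(  \mathbf{F}_{\mathbf{p}}(\widetilde{\mathbf{y}}_{n+1}-\widetilde{\mathbf{y}}_{n})-\mathbf{F}_{\mathbf{p}}(\sum\limits_{\alpha\in\Gamma_{\beta}/\{\nu\}}I_{\alpha}[\lambda_{\alpha}(t_{n},\widetilde{\mathbf{y}}_{n})]_{t_{n},t_{n+1}}){\LARGE |}\mathcal{F}_{t_{n}}\right)  \right\vert \leq K(1+\left\vert \widetilde{\mathbf{y}}_{n}\right\vert ^{2r})h_{n}^{1+\min\{\alpha,\beta,\gamma\}},
\]
is exactly the moment-matching estimate requested, once one recalls (Lemma \ref{LemmaLL} together with the standard It\^{o}--Taylor remainder bounds) that $\sum_{\alpha\in\Gamma_{\beta}/\{\nu\}}I_{\alpha}[\lambda_{\alpha}(t_{n},\widetilde{\mathbf{y}}_{n})]_{t_{n},t_{n+1}}$ is the order-$\beta$ weak It\^{o}--Taylor increment of $\mathbf{x}$ over $[t_{n},t_{n+1}]$, whose conditional moments coincide with those of $\mathbf{x}(t_{n+1})-\mathbf{x}(t_{n})$ up to terms of order $h_{n}^{\beta+1}$, for every monomial $\mathbf{F}_{\mathbf{p}}$ of degree at most $2\beta+1$.

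There is no genuine mathematical obstacle; the only points requiring care are bookkeeping ones. Theorem 14.5.2 is phrased with the exponents $h^{\beta+1}$ and $h^{\beta+1/2}$, whereas the estimates above carry the exponent governed by $\min\{\alpha,\beta,\gamma\}$, so what is actually asserted is that (\ref{GWLLS}) satisfies the hypotheses of Theorem 14.5.2 with $\min\{\alpha,\beta,\gamma\}$ playing the role of $\beta$ (the two agree whenever $\widetilde{\mathbf{\phi}}_{\beta}$ and $\widetilde{\mathbf{\Sigma}}$ are computed accurately enough, i.e. $\alpha,\gamma\ge\beta$). The remaining mild care concerns pinning down the exact family of multi-indices for $\mathbf{F}_{\mathbf{p}}$ and the moment identity between the truncated expansion and the true increment, both of which are already implicit in the proof of Theorem \ref{WSCT} and require no new argument. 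Once verified in this form, the corollary is exactly the input needed to deduce weak convergence of the jump-adapted scheme (\ref{WLL scheme for SDEs with jumps}).
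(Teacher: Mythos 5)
Your proposal is correct and follows exactly the route the paper intends: the paper gives the corollary no separate proof precisely because the proof of Theorem \ref{WSCT} consists of verifying the hypotheses of Theorem 14.5.2 in \cite{Kloeden 1995} (the moment bound from Lemma \ref{BWLLS}, the increment bound (\ref{WSDE-LLA-6}), and the closing estimate with exponent $1+\min\{\alpha,\beta,\gamma\}$) and then invoking that theorem, so the corollary merely records that verification. Your bookkeeping remark that $\min\{\alpha,\beta,\gamma\}$ plays the role of $\beta$ in the hypotheses is accurate and consistent with the order stated in Theorem \ref{WSCT}.
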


The main convergence result is the following.

\begin{theorem}
Let $\widetilde{\mathbf{z}}_{n}$ be the order-$\mathbb{\beta}$ LL
scheme defined in (\ref{WLL scheme for SDEs with jumps}) for the SDE
with jumps (\ref{W Jump SDE1})-(\ref{W Jump SDE1b}). Suppose that
the functions $\mathbf{h}_{i}$ defined in (\ref{W Jump SDE1})
satisfy the linear growth bound
\[
\left\vert \mathbf{h}_{i}(t,\mathbf{u})\right\vert \leq
K(1+\left\vert \mathbf{u}\right\vert )
\]
for $t\in\lbrack t_{0},T]$ and $\mathbf{u}\in\mathbb{R}^{d}$. Then,
under conditions of Theorem \ref{WSCT}, there exits a positive
constant $C_{g}$ such
that%
\[
\left\vert E\left(  g(\mathbf{z}(T))\right)  -E\left(  g(\widetilde
{\mathbf{z}}_{n_{T}})\right)  \right\vert \leq C_{g}(T-t_{0})h^{\beta}%
\]
for all
$g\in\mathcal{C}_{P}^{2(\beta+1)}(\mathbb{R}^{d},\mathbb{R})$.
\end{theorem}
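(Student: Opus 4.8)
The plan is to reduce the jump case to the no-jump case already handled by Theorem \ref{WSCT}, exploiting the fact that the discretization (\ref{WLL scheme for SDEs with jumps}) is built by splicing together weak LL schemes for the driftless-between-jumps dynamics. The natural tool is the convergence theorem for weak schemes for jump SDEs available in \cite{Kloeden 1995} (the jump analogue of Theorem 14.5.2), whose hypotheses are: (i) appropriate moment bounds on the scheme $\widetilde{\mathbf{z}}_n$; (ii) a one-step weak order condition for the continuous part of the dynamics; and (iii) a one-step order condition for the jump part. So the proof amounts to verifying these three ingredients and then quoting the theorem.

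First I would establish the moment bound $E(\max_{0\le n\le n_T}|\widetilde{\mathbf{z}}_n|^{2j}\,|\,\mathcal{F}_{t_0})\le M(1+|\mathbf{x}_0|^{2r})$. Between consecutive jump times the process $\widetilde{\mathbf{z}}$ evolves exactly as the weak LL scheme $\widetilde{\mathbf{y}}$ of the diffusion (\ref{WSDE-LLA-1}), so Lemma \ref{BWLLS} gives the moment bound on each inter-jump block; at a jump time the update $\widetilde{\mathbf{z}}_n=\widetilde{\mathbf{z}}_{n-}+\sum_i \mathbf{h}_i(t_n,\widetilde{\mathbf{z}}_{n-})\Delta\mathbf{n}_n^i$ multiplies $(1+|\widetilde{\mathbf{z}}_{n-}|)$ by a factor with finite moments of all orders, using the linear growth bound on $\mathbf{h}_i$ and the fact that $\Delta\mathbf{n}_n^i\in\{0,1\}$ with the jump times being a.s.\ finite in number on $[t_0,T]$. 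Taking expectations and iterating (a discrete Gronwall argument, as in Lemma 9.1 of \cite{Milshtein 1988}) yields the claimed uniform moment bound.

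Next I would verify the two local (one-step) order conditions. For the continuous part, Corollary \ref{Corollary Conv WLL jump} already tells us that the LL scheme for (\ref{WSDE-LLA-1})--(\ref{WSDE-LLA-2}) satisfies the hypotheses of Theorem 14.5.2 in \cite{Kloeden 1995}, which is precisely the required one-step weak order-$\beta$ estimate for the diffusion dynamics; since between jumps $\widetilde{\mathbf{z}}$ coincides with exactly such a scheme, this condition transfers verbatim. For the jump part, the update (\ref{WLL scheme for SDEs with jumps}) uses the \emph{exact} jump coefficients $\mathbf{h}_i$ evaluated at $\widetilde{\mathbf{z}}_{n-}$, so the jump increment of the scheme matches that of the true process $\mathbf{z}$ up to the error already incurred in the continuous part (the discrepancy $\widetilde{\mathbf{z}}_{n-}$ versus the true state), which is controlled by the linear growth bound on $\mathbf{h}_i$ together with the one-step estimate from the continuous part; this gives a jump-part local error of the required order $h_n^{\beta+1}$ against test functions in $\mathcal{C}_P^{2(\beta+1)}$.

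With the moment bound and the two local order estimates in hand, the conclusion $|E(g(\mathbf{z}(T)))-E(g(\widetilde{\mathbf{z}}_{n_T}))|\le C_g(T-t_0)h^\beta$ follows by applying the jump-SDE weak convergence theorem of \cite{Kloeden 1995}, exactly as Theorem \ref{WSCT} applies Theorem 14.5.2 in the no-jump case. The main obstacle I anticipate is the interaction between the random (jump-time) discretization and the moment/Gronwall estimate: one must be careful that the number of jumps on $[t_0,T]$, although a.s.\ finite, is unbounded, so the moment bound has to be obtained via conditioning on the jump structure (or on $\mathcal{F}_{t_0}$) and using the Poisson moment generating function to absorb the stochastic number of jump-induced growth factors; once that technicality is handled cleanly the rest is routine bookkeeping that parallels the diffusion case.
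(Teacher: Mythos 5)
Your proposal is correct and takes essentially the same route as the paper: the paper's proof is exactly the reduction you describe, combining the definition of the scheme, Corollary \ref{Corollary Conv WLL jump} (which supplies the one-step weak order conditions for the diffusion part), and a weak convergence theorem for jump-adapted schemes. The only correction is that this last result is Theorem 13.6.1 of \cite{Platen 2010}, not a jump analogue inside \cite{Kloeden 1995}; the moment bounds and jump-part estimates you sketch by hand are subsumed in that theorem's hypotheses, which hold here by the linear growth of $\mathbf{h}_{i}$ and the conditions of Theorem \ref{WSCT}.
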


\begin{proof}
It directly follows from the definition (\ref{WLL scheme for SDEs with jumps}%
), Corollary \ref{Corollary Conv WLL jump}, and Theorem 13.6.1 in
\cite{Platen 2010}.
\end{proof}

Here it is worth to mention that the above definitions and results
can be easily adapted for SDEs driven for nonhomogeneous and/or
compensated Poisson processes as well.

\section{Discussion}

Results of this work provide order conditions for the approximations
to $\mathbf{\phi}_{\beta}$ and $\mathbf{\eta}$ in such a way that
the resulting weak LL scheme preserves the convergence order of the
underlaying LL discretization. This gives, for first time, a clear
guideline for the numerical implementation of computationally
efficient weak LL schemes for SDEs. According to this, it is not
difficult to realize that the available LL schemes are not so
efficient as they could be. Usually, they are obtained from
approximations to $\mathbf{\phi}_{\beta}$ and $\mathbf{\eta}$ with
higher order of convergence than that required, which involve an
unnecessary extra computational cost. Notice that, many of the
available numerical computing environments (as Matlab, Octave, etc)
provide subroutines for the computation of complex matrix operations
up to the precision of the floating-point arithmetic. This includes
the computation of inverse matrix, exponential matrix, and Schur
decomposition required by various LL schemes. Therefore, most of the
weak LL schemes use an "exact " (up to the precision of the
floating-point arithmetic) algorithm to compute
$\mathbf{\phi}_{\beta}$ and $\mathbf{\eta}$. For example, the LL
schemes (\ref{WLLS Pade NoAuto}) currently implemented in Matlab use
the building-in subroutine\ "expm", which implements a high order
Pad\'{e} approximation to exponential matrix. However, according to
the Theorem \ref{Theorem Conv LL Pade SDE}, the lower order
$(1,1)$-Pad\'{e} approximation is sufficient for preserving the
order of convergence of these schemes. In this way, various matrix
multiplications could be saved at each integration step, which
implies a sensible reduction of the overall computational cost. This
points out the important practical value of the results obtained in
this work.

\end{document}